\date{February 29, 2016}
\newcommand{\Z}{\ensuremath{\mathbb{Z}}}
\newtheorem{theorem}{Theorem}[section]
  \newtheorem{lemma}[theorem]{Lemma}
  \newtheorem{corollary}[theorem]{Corollary}
  \newtheorem{conjecture}[theorem]{Conjecture}
  \newtheorem{definition}[theorem]{Definition}
  \newtheorem{proposition}[theorem]{Proposition}
\begin{document}

\title{Decomposing highly edge-connected graphs into homomorphic copies of a fixed tree}

\author{Martin Merker\footnote{Department of Applied Mathematics and Computer Science, Technical University of Denmark, DK-2800 Lyngby, Denmark. E-mail address: marmer@dtu.dk}}
\maketitle

\begin{abstract}
The Tree Decomposition Conjecture by Bar\'at and Thomassen states that for every tree $T$ there exists a natural number $k(T)$ such that the following holds: If $G$ is a $k(T)$-edge-connected simple graph with size divisible by the size of $T$, then $G$ can be edge-decomposed into subgraphs isomorphic to $T$. So far this conjecture has only been verified for paths, stars, and a family of bistars. We prove a weaker version of the Tree Decomposition Conjecture, where we require the subgraphs in the decomposition to be isomorphic to graphs that can be obtained from $T$ by vertex-identifications. We call such a subgraph a homomorphic copy of $T$. This implies the Tree Decomposition Conjecture under the additional constraint that the girth of $G$ is greater than the diameter of $T$. As an application, we verify the Tree Decomposition Conjecture for all trees of diameter at most 4.
\end{abstract}

\section{Introduction}
An $H$-decomposition of a graph $G$ is a partition of its edge-set into subgraphs isomorphic to $H$. In 2006, Bar\'at and Thomassen~\cite{DecompConjecture} made the following conjecture:

\begin{conjecture}\label{conj-treedecomp}
For every tree $T$ on $m$ edges, there exists a natural number $k(T)$ such that the following holds: \par 
If $G$ is a $k(T)$-edge-connected simple graph with size divisible by $m$, then $G$ has a $T$-decomposition.
\end{conjecture}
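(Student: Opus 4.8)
The plan is to deduce Conjecture~\ref{conj-treedecomp} from the homomorphic tree-decomposition result announced in the abstract. Choose $k(T)$ large compared with $|E(T)|$ and $\mathrm{diam}(T)$; we may assume $T$ has at least two edges. From high edge-connectivity we will use only two things: a Tutte--Nash-Williams packing of many edge-disjoint spanning trees in $G$, and Thomassen's modulo-orientation machinery, which produces connected spanning subgraphs of $G$ whose degrees all lie in a prescribed residue class modulo any integer bounded in terms of $T$. A homomorphic decomposition of $G$ exists by hypothesis; the work is to convert it into a decomposition into \emph{isomorphic} copies of $T$, and to bootstrap from small trees, where this conversion is understood (as in the diameter-$4$ case of this paper), to arbitrary trees.

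The bootstrap is an induction on the structure of $T$. If $\mathrm{diam}(T)$ is small we invoke the base case; otherwise take a longest path of $T$, let $v$ be an endpoint, and let $B$ be the branch of $T$ hanging below the neighbour of $v$, a broom of small diameter with $|E(B)| < |E(T)|$ and $|E(T \setminus B)| = |E(T)| - |E(B)|$. Apply the inductive decomposition to a large, still highly edge-connected subgraph $G' \subseteq G$ to obtain copies of $T' := T \setminus B$; set aside the remaining edges of $G$ as a degree-controlled connected spanning subgraph $F$, and use $F$ to graft a copy of $B$ onto the ``bottom'' vertex of each copy of $T'$, so that each union is an isomorphic copy of $T$. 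Choosing $|E(G')|$ divisible by $\operatorname{lcm}(|E(T')|,|E(B)|)$ --- which the modulo-orientation tool permits --- makes the numbers of $T'$-copies and $B$-copies match, and the leftover edges of $F$, forming a graph of bounded maximum degree and high connectivity, are mopped up by a direct packing of copies of $T$ into a reserved buffer of spanning trees.

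The heart of the matter is upgrading homomorphic copies to genuine ones without any girth hypothesis. One cannot first delete an edge from each short cycle to raise the girth, since a merely $k(T)$-edge-connected graph may have minimum degree $k(T)$ and no highly edge-connected large-girth spanning subgraph. Instead I would repair folds locally: call a homomorphic copy \emph{defective} when its defining homomorphism $T \to G$ identifies two vertices, necessarily at distance at most $\mathrm{diam}(T)$ in $T$, so that the fold is short. Using that each vertex of the buffer has arbitrarily large degree there, one re-embeds the offending branch of a defective copy along a short unused path, displacing a bounded number of edges of other copies; I would organise this as a sequence of augmenting exchanges governed by the potential ``total number of identifications over all copies'', and show that it strictly decreases until no copy is defective, all the while keeping the uncovered edges partitioned into whole copies of $T$.

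I expect this exchange argument to be the true obstacle. Simultaneous repairs near a common vertex interfere, so the exchanges must be sequenced or confined to disjoint structures; and each exchange must preserve the fragile global invariant that the uncovered edges split into whole copies of $T$. This is precisely where the paper retreats to the girth hypothesis (or, for diameter $\le 4$, to a hands-on analysis of the few possible short folds), and a proof of the full conjecture seems to need a new idea here --- most plausibly a global device, such as an auxiliary flow that carries the ``defect'' to a designated sink, or a two-phase scheme that reserves enough slack in advance to undo all folds at once, rather than the purely local repairs sketched above.
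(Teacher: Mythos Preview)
The statement you are attempting to prove is Conjecture~\ref{conj-treedecomp}, which the paper does \emph{not} prove. The paper establishes only the weaker Theorem~\ref{thm-treedecomp} on $T^*$-decompositions, and from it deduces the full conjecture only for trees of diameter at most~$4$. There is therefore no proof in the paper to compare your attempt against.

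Your proposal is not a proof either, as you yourself concede in the final two paragraphs. The crucial step---upgrading a $T^*$-decomposition to a $T$-decomposition without any girth hypothesis---is left as a heuristic exchange argument whose termination and global consistency you do not establish, and which you explicitly call ``the true obstacle'' requiring ``a new idea.'' That is precisely the obstacle the paper does not overcome.

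Moreover, the inductive reduction you sketch does not sidestep this difficulty. Even granting a $T'$-decomposition of $G'$ by induction, grafting a copy of the broom $B$ onto each copy of $T'$ so that the union is an \emph{isomorphic} copy of $T$ requires the vertices of $B$ to avoid the vertices of that $T'$-copy except at the attachment point. With no girth assumption on $G$, nothing prevents every candidate copy of $B$ in $F$ from folding back onto the $T'$-copy it is meant to extend; this is exactly the local-repair problem you already identify as unresolved. The induction thus reduces the problem to itself rather than to something easier. Your closing assessment is accurate: what you have written is a map of where the difficulty lies, not a proof of the conjecture.
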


The conjecture trivially holds if $T$ is a single edge. It is easy to see that edge-connectivity 1 suffices for a decomposition into paths of length 2, see for example~\cite{s-partitions} or~\cite{4paths}. When the conjecture was made, these were the only two cases for which it was known to be true. Thomassen verified the conjecture for paths of length 3 in~\cite{3paths} and for paths whose length is a power of 2 in~\cite{4paths} and~\cite{2kpaths}. Botler, Mota, Oshiro, and Wakabayashi proved it for the path of length 5~\cite{5path}, and also extended the result to paths of any given length~\cite{allpathsBrazil}. Another proof of the conjecture for paths of any length was found by Bensmail, Harutyunyan, and Thomass\'e~\cite{allpaths}. 

The results on the weak $k$-flow conjecture in~\cite{weak3flow} imply that the conjecture holds for all stars. It was further verified for the bistar on 4 edges with degree sequence $(3,2,1,1,1)$ by Bar\'at and Gerbner in~\cite{barat}. More generally, Thomassen proved the conjecture for all bistars where the degrees of the two non-leaves differ by precisely 1 in~\cite{bistars}. 

The aim of this paper is to prove a weaker version of Conjecture~\ref{conj-treedecomp} with a less restrictive notion of $H$-decompositions. Let us say that $H$ is a \textit{homomorphic copy} of $G$ if it can be obtained from $G$ by identifying some of its vertices and keeping all edges. Equivalently, $H$ is a homomorphic copy of $G$ if there exists a homomorphism from $G$ to $H$ that is bijective on the edge sets. In particular, a homomorphic copy of $G$ always has the same size as $G$, and every graph isomorphic to $G$ is also a homomorphic copy of $G$. 

\begin{definition}
Let $H$ and $G$ be graphs. An \textit{$H^*$-decomposition} of $G$ is a partition of the edge-set of $G$ into homomorphic copies of $H$.
\end{definition}

Clearly every $H$-decomposition is also an $H^*$-decomposition. If $T$ is a tree, then a $T^*$-decomposition is also a $T$-decomposition provided that the graph we decompose has large girth:
If $T'$ is a homomorphic copy of a tree $T$, but not isomorphic to $T$, then there exist two vertices $u,v$ in $T$ that have the same image in $T'$. Since we require the homomorphism to preserve all edges, the path between $u$ and $v$ gets mapped to a closed walk in $T'$. In particular, there exists a cycle in $T'$ whose length is at most the distance between $u$ and $v$ in $T$. Thus, if the girth of $G$ is at least the diameter of $T$, then every $T^*$-decomposition of $G$ is also a $T$-decomposition.

The following weakening of Conjecture~\ref{conj-treedecomp} is the main result of this paper.

\begin{theorem}\label{thm-treedecomp}
For every tree $T$ on $m$ edges, there exists a natural number $k_{h}(T)$ such that the following holds: 

If $G$ is a $k_{h}(T)$-edge-connected graph with size divisible by $m$, then~$G$ has a $T^*$-decomposition.

In particular, if $d$ is the diameter of $T$, then every $k_h(T)$-edge-connected graph $G$ with girth greater than $d$ has a $T$-decomposition.
\end{theorem}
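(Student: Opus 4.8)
The plan is to take $k_h(T)$ to be a sufficiently large function of $m$ and to argue in two stages: first reduce, using the edge-connectivity, to a graph in a convenient normal form, and then carry out the decomposition, exploiting the freedom to fold $T$ exactly at the points where a rigid copy would fail to fit. The last sentence of the theorem is not a separate task: it is immediate from the discussion preceding the statement, since if $G$ has girth greater than $d$ then any homomorphic copy of $T$ sitting inside $G$ must be isomorphic to $T$ (a non-trivial vertex identification would force a cycle of length at most $d$), so a $T^*$-decomposition of such a $G$ is automatically a $T$-decomposition.

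\textbf{Reduction to a normal form.} By the Nash--Williams--Tutte theorem a $2s$-edge-connected graph contains $s$ edge-disjoint spanning trees, so from a very highly edge-connected $G$ one can extract a bounded number $c=c(m)$ of edge-disjoint spanning connected subgraphs $F_1,\dots,F_c$. Using that each $F_i$ is connected and spanning, one re-inserts a carefully chosen subset of their edges so as to control, modulo $m$, the degree of every vertex as well as the total number of edges of the remaining graph $G_0$; the aim is to reach a $G_0$ that is Eulerian, satisfies $m \mid |E(G_0)|$, has all degrees divisible by $m$, and is still, say, $\tfrac12 k_h(T)$-edge-connected. The boundedly many edges removed and never re-inserted are then absorbed into copies of $T$ that are permitted to be heavily folded; this is the first place the homomorphic relaxation is essential, since a rigid copy has no reason to be available for arbitrary leftover edges.

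\textbf{Decomposing the bulk.} It remains to produce a $T^*$-decomposition of $G_0$. I would orient $G_0$ so that the resulting digraph is highly arc-connected (Nash--Williams' orientation theorem) and then pass to a modulo orientation in which every out-degree is divisible by $m$; the existence of such orientations in highly edge-connected graphs is exactly the kind of statement furnished by the machinery behind the weak $k$-flow conjecture used in~\cite{weak3flow} for stars. One then assembles homomorphic copies of $T$ by a greedy/flow procedure: fix a root of $T$ (a leaf, say) together with a depth-first ordering of its edges, and trace copies out along out-arcs, branching at each vertex according to the children of the corresponding vertex of $T$, with the divisibility of the out-degrees keeping the bookkeeping balanced so that no arc is stranded and each is used exactly once. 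The two extremes of this scheme are already understood --- for $T$ a path it is the splitting of an Eulerian graph into trails of length $m$, and for $T$ a star it is the assignment of edges to endpoints underlying the $K_{1,m}$-case --- so a general $T$ should be treated by an induction on its structure with these as base cases. Whenever the local out-degree is too small to accommodate the next branching of $T$, or a handful of arcs at a vertex are left over, one folds: several near-leaves of $T$ are placed on a single vertex of $G_0$, which is legitimate precisely because only a homomorphic copy is required.

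The step I expect to be the genuine obstacle is the second one: arranging the local branching/flow choices so that they globally assemble into copies of $T$ while leaving no edge uncovered, and making the induction on the structure of $T$ robust enough that arbitrary trees --- not merely paths and stars --- are handled. A related subtlety running through the whole argument is to use folding only when genuinely forced, so that the large-girth conclusion continues to drop out for free.
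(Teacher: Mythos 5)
Your proposal has two genuine gaps, and the second is precisely the heart of the theorem. First, the ``absorption'' step in your normal-form reduction does not work as stated: a homomorphic copy of $T$ is not a flexible container for arbitrary leftover edges. It still has exactly $m$ edges, is connected, and inherits structural constraints from $T$ (for instance, $K_{1,3}$ is not a homomorphic copy of $P_4$, since edge-bijectivity fails at the centre), so a boundedly-sized but otherwise uncontrolled set of removed edges cannot simply be packaged into ``heavily folded'' copies. The paper instead invokes a nontrivial reduction (Theorem~\ref{bipartite}, from Bar\'at--Gerbner and Thomassen): the part split off is a graph that admits a genuine $T$-decomposition, and what remains is a highly edge-connected \emph{bipartite} graph; Proposition~\ref{prop-degdivbym} then makes all degrees on one side divisible by $m$. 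The bipartiteness is not a convenience but is used essentially later.

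Second, your ``decomposing the bulk'' phase --- orient so that all out-degrees are divisible by $m$ and then greedily trace out copies of $T$, folding when stuck --- is exactly the step you yourself flag as the obstacle, and no argument is offered for it. Local divisibility of out-degrees does not provide the global consistency needed: a vertex must serve as the image of different vertices of $T$ in different copies, and for a general tree (already for a spider with three legs of length~$2$) there is no Euler-circuit or star-grouping trick that makes the greedy assembly close up with every edge used exactly once. The paper resolves this by a different mechanism: it fixes a proper $2$-colouring of $T$ with classes $T_A, T_B$, and constructs an $m$-edge-colouring of the bipartite graph $G$ in which the degree equations coming from the non-leaf vertices of $T_A$ are imposed only at vertices of class $A$ and those from $T_B$ only at class $B$ (a ``$T$-equitable'' colouring); such a colouring yields a $T^*$-decomposition by a short induction peeling off a leaf edge (Lemma~\ref{equivtreedecomp}). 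The colouring itself is obtained from the technical decomposition theorem (Theorem~\ref{thm-technical}) --- proved via Mader lifting, the modulo-$k$ orientation theorem, and small-degree spanning subgraphs (Lemma~\ref{con+smalldeg}) --- followed by K\"onig's edge-colouring theorem on regular bipartite graphs obtained by vertex-splitting. Separating the two families of constraints across the bipartition is what makes the degree bookkeeping solvable; your single-graph, orientation-plus-greedy scheme has no analogue of this and would need a new idea to be completed. Your handling of the girth consequence in the first paragraph is fine and agrees with the paper.
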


Notice that we also allow graphs with multiple edges in Theorem~\ref{thm-treedecomp}, while Conjecture~\ref{conj-treedecomp} only holds for simple graphs: The graph $nK_2$ consisting of two vertices joined by $n$ edges has edge-connectivity $n$, but the only tree $T$ for which it has a $T$-decomposition is $K_2$, a single edge. However, the graph $nK_2$ is itself a homomorphic copy of any tree on $n$ edges, and it has a $T^*$-decomposition for any tree $T$ whose size divides $n$.

The main ingredients of the proof of Theorem~\ref{thm-treedecomp} are the results on the weak $k$-flow conjecture combined with the existence of spanning trees with small degrees.

In Section 2 we collect the necessary tools for the proof of Theorem~\ref{thm-treedecomp}. The main idea of the proof is explained in Section 3, while the technical details can be found in Section 4. 
As an immediate consequence of Theorem~\ref{thm-treedecomp}, we get that Conjecture~\ref{conj-treedecomp} holds for trees of diameter at most 3. An explicit upper bound on the necessary edge-connectivity for these trees is derived in Section 5. 
In Section 6, we verify Conjecture~\ref{conj-treedecomp} for all trees of diameter 4. With a little more effort, the arguments in Section 6 can also be used to verify Conjecture~\ref{conj-treedecomp} for some trees of diameter 5, including the path of length 5. The details will appear in~\cite{thesis}. 
Finally, we show how Theorem~\ref{thm-treedecomp} extends to infinite graphs in Section 7.

\section{Methods}

Unless stated otherwise, all graphs in this paper are finite and loopless, but may have multiple edges. We write $V(G)$ and $E(G)$ for the vertex set and edge set of a graph $G$ respectively, and $e(G)$ for the number of edges of $G$. We denote the degree of a vertex $v$ in $G$ by $d(v,G)$, or by $d(v)$ if the graph is clear from the context. If the graph is directed, we denote the outdegree of a vertex $v$ by $d^+(v)$ and the indegree by $d^-(v)$. We write $P_k$ for the path on $k$ vertices, thus $P_k$ has length $k-1$.

An important tool for working with edge-connected graphs is the following reduction method due to Mader. Let $v$ be a vertex of even degree in a graph $G$. A \textit{lifting} at $v$ is the operation of deleting $v$ and adding a perfect matching between the neighbours of $v$ in $G$. We say that the lifting is \textit{connectivity-preserving}, if the edge-connectivity of the resulting graph is not smaller than the edge-connectivity of $G$. We shall use the following version of Mader's Theorem which was proved in~\cite{frank}.

\begin{theorem}\emph{\cite{frank, mader}}\label{mader}
Let $v$ be a vertex of even degree in a graph $G$. If $v$ is not incident with a cut-edge, then there exists a connectivity-preserving lifting at $v$. 
\end{theorem}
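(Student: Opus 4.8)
I would deduce this statement from the classical splitting-off lemma for \emph{local} edge-connectivity, and then do the bookkeeping that turns a single admissible split into a complete lifting and local connectivity into global connectivity. Write $d(v)=2t$. A lifting at $v$ just replaces the $2t$ edges at $v$ by $t$ chords on their other endpoints, so I would build it one chord at a time. Call a pair of edges $vu,vw$ \emph{admissible} if, after deleting them and adding the edge $uw$, the local edge-connectivity $\lambda(x,y)$ is unchanged for every $x,y\in V(G)\setminus\{v\}$ (it can never go up, so this says it does not go down). If an admissible pair exists whenever $v$ still has positive degree, then after $t$ such splits $v$ is isolated and may be deleted; since all local connectivities among $V(G)\setminus\{v\}$ are preserved throughout, the resulting graph $G'$ has $\lambda(G')=\min_{x,y\in V(G)\setminus\{v\}}\lambda_{G'}(x,y)=\min_{x,y\in V(G)\setminus\{v\}}\lambda_{G}(x,y)\ge\lambda(G)$, which is exactly connectivity-preservation. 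So it suffices to show: if $d(v)$ is even and $v$ is not incident with a cut-edge, then $v$ has an admissible pair, and (after choosing the pair with some care) this situation persists until $v$ is isolated.

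The heart of the matter is the existence of one admissible pair. Fix the demands $r(x,y)=\lambda_G(x,y)$ and, for $X\subseteq V(G)\setminus\{v\}$, put $r(X)=\max\{r(x,y):x\in X,\ y\in V(G)\setminus(X\cup\{v\})\}$. A short case check on how the cut value $d(X)$ changes under the operation $vu,vw\mapsto uw$ shows that the only sets whose degree drops are those with $v\notin X$ and $u,w\in X$, and that for such sets it drops by exactly $2$ (the complementary family, with $v\in X$ and $u,w\notin X$, gives the same cuts). Hence splitting $vu,vw$ breaks a demand precisely when there is a set $X$ with $v\notin X$, $u,w\in X$ and $d(X)\le r(X)+1$; call such an $X$ \emph{dangerous}. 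So $vu,vw$ is admissible if and only if no dangerous set contains both $u$ and $w$, and what remains is to prove that the $2t$ edge-endpoints at $v$ cannot be pairwise ``covered'' by dangerous sets, i.e.\ that some two of them lie in no common dangerous set.

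I would establish this by an uncrossing argument. Assume every two edge-endpoints at $v$ lie in a common dangerous set and, among all dangerous sets, pick one, $X$, containing a maximum number of edge-endpoints at $v$; by assumption $X$ misses one such endpoint, at a vertex $p$, and for each endpoint inside $X$ there is a dangerous set containing it and $p$; pick one, $Y$, that crosses $X$. Then combine the submodular inequality $d(X)+d(Y)\ge d(X\cap Y)+d(X\cup Y)$, the posimodular inequality $d(X)+d(Y)\ge d(X\setminus Y)+d(Y\setminus X)$, the triangle-type inequality $\lambda(x,y)\ge\min(\lambda(x,z),\lambda(z,y))$ (which controls $r$ on unions and on differences), and a count of how the $2t$ edges at $v$ distribute among $X\setminus Y$, $Y\setminus X$, and $X\cap Y$; this forces $X\cup Y$ (or in the other regime $X\cap Y$) to be a dangerous set containing strictly more edge-endpoints at $v$ than $X$, contradicting maximality — \emph{unless} one falls into a degenerate configuration in which $d(v)=3$ or $v$ is incident with a cut-edge, and both are excluded by hypothesis. (The base case $d(v)=2$ is handled directly: if the two edges $vu,vw$ are inadmissible, a dangerous set containing $u,w$ with $v$ outside is a cut of size $\le 1$ around $\{u,w\}$, producing a cut-edge at $v$.) This uncrossing step, together with pinning down exactly the exceptional cases that the degree and cut-edge hypotheses are there to kill, is where essentially all the work lies.

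It remains to complete the lifting. After one admissible split the degree of $v$ stays even, and one verifies that an admissible pair can be chosen so that $v$ is still incident with no cut-edge; in the alternative, once $v$ does become incident with a cut-edge, all of its remaining edges lead to the same side of that edge, and pairing that edge with any of the others (and the rest arbitrarily) merely reroutes through the cut-edge and is admissible, so the lifting finishes canonically there. Iterating $t$ times isolates $v$, and by the reduction of the first paragraph the resulting graph witnesses a connectivity-preserving lifting. This is precisely the form of Mader's theorem worked out in \cite{frank}, and the argument sketched above is the dangerous-set proof of the underlying splitting-off lemma.
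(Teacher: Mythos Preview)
The paper does not prove this theorem; it is stated as a cited result from \cite{frank, mader} and used as a black box. Your proposal sketches the standard dangerous-set/uncrossing proof of Mader's splitting-off lemma as presented in \cite{frank}, which is indeed the argument underlying the cited references. As a sketch it is on the right track, though several steps are left at the level of ``one verifies'' or ``a short case check'' --- in particular, the precise way the degenerate cases in the uncrossing reduce to $d(v)=3$ or to a cut-edge at $v$, and the claim that after an admissible split one can always continue (either $v$ remains cut-edge-free or the remaining edges can be paired off canonically), are exactly the places where the real work in \cite{frank} happens and would need to be filled in for a complete proof. Since the paper itself offers no proof to compare against, there is nothing further to contrast.
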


Another useful consequence of large edge-connectivity is the existence of edge-disjoint spanning trees. If a graph contains $k$ edge-disjoint spanning trees, then it is clearly $k$-edge-connected. Conversely, Nash-Williams~\cite{nash-williams} and Tutte~\cite{tutte} independently proved that large edge-connectivity implies the existence of many edge-disjoint spanning trees.

\begin{theorem}\emph{\cite{nash-williams, tutte}}\label{nash-williams}
Let $k$ be a natural number. If $G$ is a $2k$-edge-connected graph, then $G$ contains $k$ edge-disjoint spanning trees. 
\end{theorem}

Apart from many edge-disjoint spanning trees, large edge-connectivity also guarantees the existence of spanning trees with small vertex degrees. This has been investigated by several authors, see for example~\cite{strothmann}, \cite{ellingham}, \cite{hasanvand}, and~\cite{liu}. Small-degree spanning trees have already been used in~\cite{barat}, \cite{3paths}, \cite{4paths}, and \cite{2kpaths} to prove special cases of Conjecture~\ref{conj-treedecomp}. 

Our main interest is to prove the existence of $k(T)$. Since the method presented here will not result in the best possible upper bound on $k(T)$, we shall avoid the use of stronger but more technical statements for the sake of simplicity. The following theorem was proved in~\cite{ellingham} and is sufficient for our purposes.

\begin{theorem}\emph{\cite{ellingham}}\label{ellingham}
Let $k$ be a natural number. If $G$ is a $4k$-edge-connected graph, then $G$ has a spanning tree $T$ such that $d(v,T) < d(v,G)/k$ for every $v\in V(G)$.
\end{theorem}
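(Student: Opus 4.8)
The plan is to exhibit the desired spanning tree as an extremal object. To see first why the bound has the right shape: since $4k=2\cdot 2k$, Theorem~\ref{nash-williams} gives $2k$ pairwise edge-disjoint spanning trees of $G$, and their union $H$ is a $2k$-edge-connected spanning subgraph with $d(v,H)\le d(v,G)$ and $e(H)=2k\,(|V(G)|-1)$; hence a spanning tree of $H$ must use, on average, only a $1/(2k)$-fraction of the edges at each vertex, which is already well below the permitted $1/k$-fraction. So the real content is that the bound can be achieved at \emph{every} vertex simultaneously, and the hypothesis of $4k$- (rather than $2k$-) edge-connectivity is there to provide room for that.

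To carry this out I would work directly in $G$. Among all spanning trees of $G$ pick one, $T$, whose degree sequence written in non-increasing order is lexicographically smallest — a refined ``minimum-maximum-degree'' spanning tree — and claim it works. Suppose some vertex $v$ has $d(v,T)\ge d(v,G)/k$ and put $d:=d(v,T)$. For a tree edge $f$, deleting $f$ from $T$ splits $V(G)$ into two parts; let $C_f$ be the set of edges of $G$ joining the two parts (the fundamental cut of $f$). A short check shows that every edge of $G$ incident with $v$ — tree edge or not — lies in $C_f$ for exactly one tree edge $f$ that is itself incident with $v$; summing over the $d$ tree edges at $v$ gives $\sum_f |C_f\cap E(v)|=d(v,G)$, so by averaging some tree edge $f$ at $v$ has $|C_f\cap E(v)|\le d(v,G)/d\le k$. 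Since $G$ is $4k$-edge-connected we have $|C_f|\ge 4k$, so $C_f$ contains at least $3k$ edges not incident with $v$, and for any such edge $g$ the graph $T':=T-f+g$ is again a spanning tree with $d(v,T')=d(v,T)-1$.

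A single swap may push an endpoint of $g$ above its own threshold, so to finish one runs this as an iteration in the style of F\"urer--Raghavachari: starting from the lexicographically minimal $T$, repeatedly locate a vertex exceeding its threshold and perform a swap as above, using the surplus of $\ge 3k$ admissible edges in $C_f$ to choose $g$ so as not to re-create a violation, i.e.\ pushing the ``defect'' down a chain of high-degree vertices until it vanishes; the F\"urer--Raghavachari analysis then shows that the minimality of $T$ is incompatible with the existence of any over-loaded vertex, which is the contradiction we want. Equivalently — and this is the route taken in~\cite{ellingham} — one deduces the statement from the existence theory for connected $(g,f)$-factors, applied with $f(v)=\lceil d(v,G)/k\rceil-1$, after which any spanning tree of the resulting connected spanning subgraph has the required degrees.

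The step I expect to be the main obstacle is exactly the iteration of the last paragraph. Handling a single prescribed vertex is a one-line averaging argument, but certifying the degree bound at all vertices at once requires the careful bookkeeping that makes the swap process terminate without ever leaving a vertex over $d(v,G)/k$. Since the resulting bound on the edge-connectivity is in any case far from best possible, one may simply quote Theorem~\ref{ellingham} as stated in~\cite{ellingham}; the extremal-spanning-tree argument above is, in outline, why it is true, and it uses nothing beyond Theorem~\ref{nash-williams} and a fundamental-cut count.
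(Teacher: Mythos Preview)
The paper does not prove Theorem~\ref{ellingham}; it is quoted from~\cite{ellingham} and used as a black box (its sharpening, Corollary~21 of~\cite{ellingham}, is likewise invoked without proof in Section~5). So your closing suggestion --- ``one may simply quote Theorem~\ref{ellingham} as stated in~\cite{ellingham}'' --- is exactly what the paper does, and nothing further is needed here.

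As for your sketch toward an independent argument: the averaging over the fundamental cuts at $v$ is correct and does yield a single swap lowering $d(v,T)$, but the gap is precisely where you place it. There is no reason the $\ge 3k$ edges of $C_f$ avoiding $v$ should include one whose \emph{both} endpoints are below their own thresholds, so a single swap need not improve the lexicographic degree sequence, and the F\"urer--Raghavachari-style bookkeeping you gesture at is essentially the whole content of the theorem rather than a detail. (The connected-$(g,f)$-factor route you mention at the end is indeed closer to what~\cite{ellingham} actually does.) Since the paper treats the result as an imported tool, leaving the sketch at this level and citing~\cite{ellingham} is entirely appropriate.
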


Repeated application of Theorem~\ref{ellingham} also guarantees the existence of highly edge-connected subgraphs with small degrees.

\begin{lemma}\label{con+smalldeg}
Let $k$ and $q$ be natural numbers. If $G$ is a graph with $4kq$ edge-disjoint spanning trees, then $G$ has a spanning $q$-edge-connected subgraph $H$ such that $d(v,H) < d(v,G)/k$ for every $v\in V(G)$.
\end{lemma}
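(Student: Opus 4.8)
The plan is to split the given family of $4kq$ edge-disjoint spanning trees into $q$ blocks of $4k$ trees each, apply Theorem~\ref{ellingham} separately to the union of each block, and then take the union of the $q$ spanning trees thus produced.

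In detail, I would label the given edge-disjoint spanning trees $T_1,\dots,T_{4kq}$ and, for $i=1,\dots,q$, let $G_i$ be the spanning subgraph of $G$ formed by the union of $T_{4k(i-1)+1},\dots,T_{4ki}$. The graphs $G_1,\dots,G_q$ are then pairwise edge-disjoint spanning subgraphs of $G$, and each $G_i$ contains $4k$ edge-disjoint spanning trees, hence is $4k$-edge-connected: every cut is crossed by at least one edge of each of these $4k$ trees, and they are edge-disjoint. (This is the trivial direction of Theorem~\ref{nash-williams}.)

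Next I would apply Theorem~\ref{ellingham} to each $G_i$ in turn, obtaining a spanning tree $S_i\subseteq G_i$ with $d(v,S_i) < d(v,G_i)/k$ for every $v\in V(G)$. Set $H=\bigcup_{i=1}^{q} S_i$. Since the $G_i$ are pairwise edge-disjoint, so are the $S_i$; and since each $S_i$ is a spanning tree of $G$, the union $H$ is a spanning subgraph of $G$ that contains $q$ edge-disjoint spanning trees, hence is $q$-edge-connected. Finally, for every $v\in V(G)$, using that the $G_i$ are edge-disjoint subgraphs of $G$,
\[
 d(v,H)=\sum_{i=1}^{q} d(v,S_i) < \frac1k\sum_{i=1}^{q} d(v,G_i) \le \frac{d(v,G)}{k},
\]
which is exactly the required bound, so $H$ has all the desired properties.

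There is no real obstacle here: the only facts one needs are that the union of $j$ edge-disjoint spanning trees is $j$-edge-connected and that vertex degrees add over edge-disjoint subgraphs, both immediate. The point of the argument is simply to choose the blocks $G_i$ pairwise edge-disjoint, so that the $q$ error terms $d(v,G_i)/k$ coming from the separate applications of Theorem~\ref{ellingham} sum to at most $d(v,G)/k$ rather than accumulating.
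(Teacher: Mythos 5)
Your proposal is correct and follows essentially the same argument as the paper: partition the $4kq$ trees into $q$ groups of $4k$, apply Theorem~\ref{ellingham} to each (edge-disjoint, hence $4k$-edge-connected) union, and take the union of the resulting $q$ spanning trees, summing the degree bounds. Nothing is missing; the degree estimate and the $q$-edge-connectivity of $H$ are justified exactly as in the paper's proof.
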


\begin{proof}
Let $G_i$ consist of $4k$ of the spanning trees for $i\in\{1,\ldots ,q\}$. By Theorem~\ref{ellingham}, for each $G_i$ we can find a spanning tree $T_i$ with $d(v,T_i) < d(v,G_i)/k$. Let $H$ be the union of $T_1,\ldots ,T_q$. Now $H$ is $q$-edge-connected and we have
\begin{align*}
d(v,H) = \sum_{i=1}^q d(v,T_i) < \frac{1}{k}\sum_{i=1}^q d(v,G_i) \leq \frac{1}{k}d(v,G).
\end{align*}
\end{proof}

Another important tool is a recent result on orientations modulo $k$. It was proved in~\cite{weak3flow} that every $(2k^2+k)$-edge-connected graph $G$ has an orientation such that every vertex gets a prescribed outdegree modulo $k$, provided that the sum of all prescribed outdegrees is congruent to $e(G)$ modulo $k$. In~\cite{weak3flowimproved}, the bound on the edge-connectivity has been improved to $3k-3$ for $k$ odd, and to $3k-2$ for $k$ even.

\begin{theorem}\emph{\cite{weak3flowimproved}}\label{weak-k-flow}
Let $G$ be a graph with $m$ edges, $k$ be a natural number, and $p:V(G)\to \Z /k\Z$ be a function satisfying $\sum_{v\in V(G)}p(v) \equiv m \mbox{ (mod k)}$. If $G$ is $(3k-2)$-edge-connected, then there exists an orientation of the edges of $G$ such that $d^+(v)\equiv p(v) \mbox{ (mod k)}$ for every $v\in V(G)$.
\end{theorem}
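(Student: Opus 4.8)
The plan is to prove the theorem by induction on the number of edges of $G$, exploiting the high edge-connectivity to pass repeatedly to smaller $(3k-2)$-edge-connected graphs. We may assume $k\ge 2$, as every orientation works when $k=1$. First observe that the hypothesis $\sum_{v}p(v)\equiv m\pmod k$ makes the congruence at one vertex automatic: in any orientation $\sum_{v}d^+(v)=m$, so if $d^+(v)\equiv p(v)\pmod k$ holds for every $v\in V(G)\setminus\{z\}$ with $z$ a fixed vertex, then it holds at $z$ too. I therefore fix a \emph{root} $z$ and only aim to control $d^+(v)$ for $v\ne z$; this loses nothing, since an orientation meeting all the congruences in particular meets those off $z$. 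The statement proved by induction is exactly this rooted version, the inductive hypothesis being the same for graphs with fewer edges.

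For the base case $|V(G)|\le 2$: a $(3k-2)$-edge-connected graph on two vertices has at least $3k-2\ge k-1$ parallel edges joining them (or has no edges), and orienting the right number away from the non-root vertex succeeds because $\{0,1,\dots,m\}$ meets every residue class mod $k$ once $m\ge k-1$. For $|V(G)|\ge 3$ one looks for a reducible configuration, and there are several. If $G$ is $(3k-1)$-edge-connected and $G-z$ contains an edge $ab$, then $G-ab$ is still $(3k-2)$-edge-connected, so by induction it has an orientation with $d^+(v)\equiv p(v)\pmod k$ for $v\ne z$, except that at $a$ we prescribe $p(a)-1$ (the sum condition survives, both sides dropping by one), and orienting $ab$ from $a$ to $b$ completes $G$. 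If some $v\ne z$ is joined to $z$ by at least $k-1$ parallel edges, contract $\{z,v\}$ to a new root: the only deleted edges join $z$ to $v$, so cross no cut keeping $z,v$ together, hence edge-connectivity does not drop, while the edge count strictly decreases; apply induction and then orient the $z$-$v$ edges so as to correct $d^+(v)$ mod $k$, which at least $k-1$ of them permit. If $G-z$ contains a set $X$ with $|X|\ge 2$ such that $G[X]$ is $(3k-2)$-edge-connected, contract $X$: the problem splits into a smaller instance on $G/X$ and a self-contained instance on $G[X]$, the prescriptions being arranged so that both sum conditions hold.

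The heart of the proof — and the step I expect to be the main obstacle — is the \emph{rigid case}, where none of these reductions applies: $G$ is exactly $(3k-2)$-edge-connected, no non-root vertex sends $k-1$ parallel edges to $z$, and $G-z$ contains no $(3k-2)$-edge-connected piece. Now the connectivity must be used constructively. One picks a non-root vertex $v$ and, by Menger's theorem, obtains $3k-2$ edge-disjoint $v$-$z$ paths in $G$; one fixes some orientation of the edges at $v$ realising $d^+(v)\equiv p(v)\pmod k$, absorbs $v$ together with the interiors of these paths into $z$, applies the inductive hypothesis to the resulting smaller $(3k-2)$-edge-connected graph, and finally uncontracts, invoking Mader's lifting theorem (Theorem~\ref{mader}) to redistribute the edges at the path interiors without upsetting the congruences already met. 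Making this go through with the exact constant $3k-2$ hinges on a careful count of how many residues mod $k$ the value $d^+(v)$ can be driven to while the remaining prescription stays solvable; performing that count parity-sensitively is precisely what yields the sharper bound $3k-3$ for odd $k$. Verifying that the reductions above together with this surgery cover every $(3k-2)$-edge-connected graph, and that no step increases the edge count, completes the induction.
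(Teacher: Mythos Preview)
The paper does not prove this theorem at all: Theorem~\ref{weak-k-flow} is quoted from \cite{weak3flowimproved} as an external tool, with no argument given beyond the remark that it sharpens the earlier bound $2k^2+k$ from \cite{weak3flow}. There is therefore nothing in the paper to compare your proposal against.

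As for the proposal itself, the opening reductions (fixing a root, deleting an edge when the connectivity exceeds $3k-2$, contracting a bundle of $k-1$ parallel edges to $z$, contracting a highly connected piece of $G-z$) are all sound and do appear, in some form, in the actual proof of Lov\'asz, Thomassen, Wu and Zhang. The gap is in your ``rigid case''. The description there is not a proof but a wish: ``absorb $v$ together with the interiors of these paths into $z$, apply induction, then uncontract and invoke Mader's lifting theorem to redistribute the edges at the path interiors without upsetting the congruences'' does not correspond to a well-defined operation. Mader lifting preserves edge-connectivity, not prescribed outdegrees modulo $k$, and you give no mechanism by which the orientation on the contracted graph determines one on $G$ that respects the pre-committed edges at $v$. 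The genuine argument in \cite{weak3flowimproved} handles the rigid case differently: one picks a vertex $v$ adjacent to $z$, contracts $\{v,z\}$ for each of the $k$ possible pre-orientations of the $vz$-edges, and shows by a counting argument on tight cuts that at least one of these $k$ contracted graphs remains $(3k-2)$-edge-connected (respectively $(3k-3)$ for $k$ odd). That counting is the crux, and your sketch neither states it nor supplies a substitute.
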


As an application of Theorem~\ref{weak-k-flow}, it was shown in~\cite{bistars} that a highly edge-connected bipartite graph $G$ with size divisible by $k$ can be decomposed into two $k$-edge-connected graphs $G_1$ and $G_2$ such that in $G_1$ all vertices of $A$ have degree divisible by $k$, and in $G_2$ all vertices of $B$ have degree divisible by $k$. Using the same proof, we can even achieve that both $G_1$ and $G_2$ have arbitrarily large edge-connectivity, which has also been used in~\cite{5path}. 

\begin{proposition}\emph{\cite{5path, bistars}}\label{prop-degdivbym}
Let $m$ and $l$ be natural numbers, $G$ be a bipartite graph on vertex classes $A_1$ and $A_2$, and suppose the size of $G$ is divisible by $m$. If $G$ has $3m-2+2l$ edge-disjoint spanning trees, then $G$ can be decomposed into two spanning $l$-edge-connected graphs $G_1$ and $G_2$ such that all vertices of $A_i$ have degree divisible by $m$ in $G_i$, for $i\in\{1,2\}$. 
\end{proposition}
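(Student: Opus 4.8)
The plan is to reserve $2l$ of the edge-disjoint spanning trees for the edge-connectivity of the two parts, and the remaining $3m-2$ of them for one application of Theorem~\ref{weak-k-flow} with $k=m$. Let $T_1,\dots ,T_{3m-2+2l}$ be edge-disjoint spanning trees of $G$, set $F_1=T_1\cup\dots\cup T_l$ and $F_2=T_{l+1}\cup\dots\cup T_{2l}$, and let $G'=G-E(F_1)-E(F_2)$. Then $G'$ still contains the $3m-2$ edge-disjoint spanning trees $T_{2l+1},\dots ,T_{3m-2+2l}$, so $G'$ is $(3m-2)$-edge-connected, while each $F_i$ is a spanning $l$-edge-connected subgraph of $G$. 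I aim to take $G_i=F_i\cup G_i'$, where $E(G')=E(G_1')\sqcup E(G_2')$ is a partition extracted from a suitable orientation of $G'$; then $G_1$ and $G_2$ automatically partition $E(G)$, and since $G_i\supseteq F_i$ both are spanning and $l$-edge-connected.

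To obtain the partition of $E(G')$, I would fix an orientation of $G'$ and let $G_1'$ consist of the edges directed from their endpoint in $A_1$ to their endpoint in $A_2$, and $G_2'$ of the remaining edges, those directed from $A_2$ to $A_1$; this is well defined because $G'$, being a subgraph of $G$, is bipartite with classes $A_1,A_2$. With this choice $d(v,G_1')=d^+(v)$ for every $v\in A_1$ and $d(v,G_2')=d^+(v)$ for every $v\in A_2$, since in both cases the edges of $G_i'$ at $v$ are precisely the out-edges of $v$. Hence, writing $d(v,G_i)=d(v,F_i)+d(v,G_i')$ for the edge-disjoint union, the target conditions ``$d(v,G_1)\equiv 0\pmod{m}$ for all $v\in A_1$'' and ``$d(v,G_2)\equiv 0\pmod{m}$ for all $v\in A_2$'' become $d^+(v)\equiv -d(v,F_1)\pmod{m}$ for $v\in A_1$ and $d^+(v)\equiv -d(v,F_2)\pmod{m}$ for $v\in A_2$. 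So I would apply Theorem~\ref{weak-k-flow} to $G'$ with $k=m$ and the function $p\colon V(G')\to\Z/m\Z$ defined by $p(v)=-d(v,F_1)$ for $v\in A_1$ and $p(v)=-d(v,F_2)$ for $v\in A_2$.

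The only hypothesis of Theorem~\ref{weak-k-flow} left to check is the feasibility condition $\sum_{v\in V(G')}p(v)\equiv e(G')\pmod{m}$. Since $F_1$ and $F_2$ are bipartite with classes $A_1,A_2$, summing degrees over one class gives $\sum_{v\in A_1}d(v,F_1)=e(F_1)$ and $\sum_{v\in A_2}d(v,F_2)=e(F_2)$, so $\sum_v p(v)\equiv -e(F_1)-e(F_2)=e(G')-e(G)\equiv e(G')\pmod{m}$, the last congruence being exactly where the hypothesis $m\mid e(G)$ is used. Thus the desired orientation of $G'$ exists, and $G_1=F_1\cup G_1'$, $G_2=F_2\cup G_2'$ have all the required properties. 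I expect this bookkeeping to be the only delicate point: one has to match the bipartite two-colouring of $E(G')$ with the orientation correctly for vertices of $A_1$ and of $A_2$, track the fixed contributions of $F_1$ and $F_2$ to the degrees, and confirm that the resulting $p$ meets the divisibility constraint of Theorem~\ref{weak-k-flow}; the connectivity of $G_1$ and $G_2$ then comes for free from the $2l+(3m-2)$ split of the spanning trees, using that a graph containing $l$ edge-disjoint spanning trees is $l$-edge-connected.
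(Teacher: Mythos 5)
Your proposal is correct and follows essentially the same route as the paper: set aside $2l$ spanning trees as two $l$-edge-connected spanning subgraphs, orient the remaining $(3m-2)$-edge-connected graph via Theorem~\ref{weak-k-flow} with prescribed outdegrees $-d(v,F_i)$ modulo $m$ (the paper writes this as $m-d(v,H_i)$), and attach to each $F_i$ the edges oriented away from $A_i$. The only difference is that you spell out the feasibility check $\sum_v p(v)\equiv e(G')\pmod m$, which the paper leaves implicit.
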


\begin{proof}
Let $H_1$ and $H_2$ each be the union of $l$ of the spanning trees, and let $G'$ be the graph on the remaining edges. For $v$ in $A_i$ define $p(v)=m-d(v,H_i)$. Since $G'$ is $(3m-2)$-edge-connected, we can apply Theorem~\ref{weak-k-flow} to orient its edges so that each vertex $v$ has outdegree congruent to $p(v)$ modulo $m$. For $i\in\{1,2\}$, let $G_i$ be the union of $H_i$ and all edges oriented from $A_i$ to $A_{3-i}$.
\end{proof}

It was shown independently in~\cite{barat} and~\cite{bistars} that it is sufficient to prove Conjecture~\ref{conj-treedecomp} for bipartite graphs. The proofs there show that the following is true.

\begin{theorem}\emph{\cite{barat, bistars}}\label{bipartite}
Let $T$ be a tree on $m$ edges. If $G$ is a $(4k + 8m^{2m+3})$-edge-connected graph, then $G$ can be decomposed into a $k$-edge-connected bipartite graph~$G'$ and a graph $H$ admitting a $T$-decomposition.
\end{theorem}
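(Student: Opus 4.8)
The plan is to reduce the statement to a single vertex $2$-colouring of $G$ followed by a careful covering of the monochromatic edges by copies of $T$. First I would fix a $2$-colouring of $V(G)$, with colour classes $A_1$ and $A_2$, minimising the number of monochromatic edges, and let $M$ be the resulting set of monochromatic edges. The key point is that $G-M$ retains roughly half of the edge-connectivity of $G$: for any edge cut $(X,V(G)\setminus X)$ of $G$, recolouring every vertex of $V(G)\setminus X$ with the opposite colour swaps the monochromatic and the bichromatic edges inside this cut while leaving the status of all other edges unchanged, so by minimality at most half of the edges of each cut belong to $M$. Hence $G-M$ is $(2k+4m^{2m+3})$-edge-connected, and by Theorem~\ref{nash-williams} it contains $k+2m^{2m+3}$ edge-disjoint spanning trees. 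I would then let $G_0$ be the union of $k$ of these trees and $G_1$ the graph on the remaining edges of $G-M$; thus $G_0$ is $k$-edge-connected, the graphs $G_0,G_1,M$ are pairwise edge-disjoint, and $G_1$ is still $2m^{2m+3}$-edge-connected.

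This setup reduces the whole statement to finding a subgraph $H$ with $M\subseteq E(H)\subseteq M\cup E(G_1)$ that admits a $T$-decomposition: for such an $H$ the graph $G':=G-E(H)$ contains $G_0$ and is therefore $k$-edge-connected, it has no monochromatic edge and is therefore bipartite with classes $A_1,A_2$, and $e(H)$ is automatically divisible by $m$ since $H$ is a disjoint union of copies of $T$. Note that once $G_0$ has been set aside the edge-connectivity of $G'$ is guaranteed no matter how $H$ is chosen, so all remaining work is about the combinatorics of the covering. I would build $H$ greedily as an edge-disjoint union of copies of $T$, going through the edges of $M$ and, for each edge $e\in M$ that is not yet covered, attaching a new copy of $T$ through $e$ whose other edges are so-far-unused edges of $G_1$.

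The main obstacle will be carrying out this covering while keeping everything edge-disjoint and never exhausting $G_1$ locally. Two regimes have to be reconciled. Where $M$ is locally dense one cannot afford a full copy of $T$ for each monochromatic edge, so there one must decompose the dense part of $M$ almost by itself, using edges of $G_1$ only to patch the $O(m)$-bounded local irregularities; where $M$ is locally sparse one should root the copy of $T$ through $e$ at a leaf edge and send its branching part into $G_1$ away from the endpoints of $e$, so that no vertex ever becomes the branch point of too many copies. Controlling, across every edge cut and at every vertex, how many edges of $G_1$ get consumed is precisely what fixes the order of magnitude of the required connectivity: embedding a copy of $T$ in $G_1$ while avoiding a bounded set of overused vertices forces branching to depth $\operatorname{diam}(T)\le m$ with branching up to $m$, hence about $m^{m}$ explored vertices, and one wants that much edge-connectivity in $G_1$ with polynomial room to spare — which, together with the $4k$ reserved for the spanning trees of $G_0$, is what the hypothesis $(4k+8m^{2m+3})$ provides.
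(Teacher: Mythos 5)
Your first half is sound and is indeed the same opening move as the proofs this theorem is cited from: for a $2$-colouring minimising the number of monochromatic edges, your flipping argument correctly shows that at most half of every edge cut lies in $M$, so $G-M$ is $(2k+4m^{2m+3})$-edge-connected; by Theorem~\ref{nash-williams} you may reserve $k$ edge-disjoint spanning trees as $G_0$ and keep a $2m^{2m+3}$-edge-connected remainder $G_1$, and the reduction of the whole statement to ``cover $M$ by edge-disjoint copies of $T$ using only edges of $M\cup E(G_1)$'' is clean, since bipartiteness of $G'$, its $k$-edge-connectivity, and the divisibility of $e(H)$ then all come for free.

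The gap is that this covering step is the entire content of the theorem, and you have not proved it -- you have only named the difficulty. ``Build $H$ greedily, one copy of $T$ through each uncovered edge of $M$'' cannot work as stated: at a vertex $v$ the degree $d_M(v)$ may be arbitrarily large compared with $d_{G_1}(v)$, which after setting aside $G_0$ is only guaranteed to be about $2m^{2m+3}$; for $m\ge 2$ every copy of $T$ through an $M$-edge at $v$ must use a second edge at one of its endpoints, so the copies through the $M$-edges at $v$ must consume either further $M$-edges at $v$ or $G_1$-edges at $v$, and the latter run out. You acknowledge this tension (``two regimes''), but your fix for the dense regime -- ``decompose the dense part of $M$ almost by itself, using edges of $G_1$ only to patch the $O(m)$-bounded local irregularities'' -- is an unsupported assertion: $M$ is an arbitrary graph with no edge-connectivity, no degree divisibility and no structure you may invoke, and nothing in your argument (or in the tools you use) yields a near-$T$-decomposition of it with only boundedly many leftover edges per vertex; making that precise is essentially the hard work carried out in the cited papers. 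Similarly, the closing count (``about $m^m$ explored vertices, with polynomial room to spare'') explains why an exponent like $2m+3$ is plausible, but it is not an argument that the greedy process never gets stuck: a real proof must maintain an explicit invariant (for instance, a bound at every vertex on how many non-$M$ edges have been consumed there, in terms of its remaining $M$-degree) and verify it for every extension step. As it stands, the proposal reproduces the standard reduction to the covering problem but leaves that problem unsolved.
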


In particular, it is sufficient to prove Theorem~\ref{thm-treedecomp} for bipartite graphs. Combined with Proposition~\ref{prop-degdivbym}, we may even assume that all vertices on one side of the partition have degree divisible by $m$.

\section{Proof strategy for Theorem~\ref{thm-treedecomp}}

The general idea of the proof of Theorem~\ref{thm-treedecomp} is the following. Given a tree $T$ with $m$ edges, let $T_A$ and $T_B$ be the two vertex classes induced by a proper 2-colouring of $V(T)$. We may assume that $T_B$ contains a leaf of $T$. We denote the non-leaves in $T_A$ by $t_1,\ldots ,t_a$ and the non-leaves in $T_B$ by $t_{a+1},\ldots ,t_{a+b}$. We colour the edges of $T$ with colours $1,\ldots ,m$ so that no two edges receive the same colour. For $i\in\{1,\ldots ,a+b\}$, we denote the set of colours at vertex $t_i$ by $T(i)$. Let $T(a+b+1)$ be the set of colours which are not contained in $T(i)$ for any $i\in\{a+1,\ldots ,a+b\}$. Notice that $T(a+b+1)$ is non-empty since $T_B$ contains a leaf.

To find a $T^*$-decomposition of a bipartite graph $G$, it is sufficient to find an edge-colouring of $G$ such that certain degree equations are satisfied. We write $d_i(v)$ to denote the degree of a vertex $v$ in colour $i$. For every $i\in\{1,\ldots ,a+b\}$, we consider a set of equations involving the colours in $T(i)$, namely $d_j(v)=d_k(v)$ for every $j,k\in T(i)$. However, we do not need these equations to be satisfied at every vertex of $G$. If $A$ and $B$ are the vertex classes of $G$, then we want the vertices in $A$ to satisfy the equations involving $T(1),\ldots ,T(a)$, and the vertices in $B$ to satisfy the equations involving $T(a+1),\ldots ,T(a+b)$. 

\begin{definition}
Let $G$ be a bipartite graph on vertex classes $A$ and $B$. We say an edge-colouring of $G$ with colours $1,\ldots ,m$ is \textit{$T$-equitable}, if 
\begin{itemize}
\item
for every $v\in A$, $i\in\{1,\ldots ,a\}$ and $j,k\in T(i)$ we have $d_j(v)=d_k(v)$, and
\item 
for every $v\in B$, $i\in\{a+1,\ldots ,a+b\}$ and $j,k\in T(i)$ we have $d_j(v)=d_k(v)$.
\end{itemize}
\end{definition}

\begin{lemma}\label{equivtreedecomp}
Let $G$ be a bipartite graph on vertex classes $A$ and $B$. If $G$ admits a $T$-equitable colouring then $G$ has a $T^*$-decomposition.
\end{lemma}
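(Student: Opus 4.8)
The plan is to prove, by induction on $m=e(T)$, the following statement, which is slightly stronger than the lemma: if $G$ is a bipartite graph with vertex classes $A$ and $B$ admitting a $T$-equitable colouring, then $G$ has a $T^*$-decomposition in which every copy is $\phi(T)$ for a homomorphism $\phi\colon T\to G$ with $\phi(T_A)\subseteq A$ and $\phi(T_B)\subseteq B$; carrying this orientation through the induction is what makes it work. I first record that in a $T$-equitable colouring all colour classes have the same size $N:=e(G)/m$: if colours $c,c'$ label two edges of $T$ sharing a vertex $w$, then $w$ has degree at least $2$ in $T$ and hence is one of the vertices $t_i$; if $t_i\in T_A$ then $d_c(v)=d_{c'}(v)$ for all $v\in A$, and summing over $A$ gives $|E_c|=|E_{c'}|$, and symmetrically if $t_i\in T_B$. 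Since $T$ is connected this forces $|E_c|=e(G)/m$ for every colour $c$.

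For the base case $m=1$ we have $T=K_2$; writing $T_A=\{x\}$ and $T_B=\{y\}$, each edge $uv$ of $G$ with $u\in A$ and $v\in B$ forms a copy via $x\mapsto u$, $y\mapsto v$. For the inductive step with $m\ge 2$, pick any leaf $\ell$ of $T$ and let $t_{i_0}$ be its neighbour; since $m\ge 2$ the vertex $t_{i_0}$ is a non-leaf, and the colour $c_0$ of $\ell t_{i_0}$ lies in $T(i_0)$ and in no other $T(i)$. Let $T':=T-\ell$ with the induced $2$-colouring of $V(T')$ and the induced edge-colouring by $\{1,\dots,m\}\setminus\{c_0\}$, and let $G':=G-E_{c_0}$ with the induced colouring. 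Since every set $T'(i)$ is contained in $\{1,\dots,m\}\setminus\{c_0\}$, the degrees $d_c(v)$ with $c\ne c_0$ are unchanged by deleting $E_{c_0}$, so the induced colouring of $G'$ is $T'$-equitable; by induction $G'$ has a $T'^*$-decomposition into copies $\psi_1(T'),\dots,\psi_N(T')$ with $\psi_j(T'_A)\subseteq A$ and $\psi_j(T'_B)\subseteq B$ for all $j$, where $N=e(G')/(m-1)=|E_{c_0}|$.

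It remains to reattach the deleted colour class. Assume $t_{i_0}\in T_A$; the case $t_{i_0}\in T_B$ is symmetric with the roles of $A$ and $B$ exchanged. I claim that for every $v\in A$ the number of indices $j$ with $\psi_j(t_{i_0})=v$ equals $d_{c_0}(v)$. Pick $c'\in T'(i_0)$ if $t_{i_0}$ is still a non-leaf of $T'$, and otherwise let $c'$ be the colour of the unique edge of $T'$ at $t_{i_0}$; in both cases $c'\ne c_0$, both $c'$ and $c_0$ lie in $T(i_0)$, and the edge of $T'$ of colour $c'$ meets $t_{i_0}$, which lies in $T'_A$. Hence in each copy $\psi_j$ the vertex $\psi_j(t_{i_0})$ is the $A$-endpoint of the colour-$c'$ edge of $\psi_j(T')$, so $j\mapsto\psi_j(e_{c'})$ is a bijection from $\{j:\psi_j(t_{i_0})=v\}$ onto the set of colour-$c'$ edges of $G'$ at $v$, whose size is $d_{c'}(v)=d_{c_0}(v)$ by $T$-equitability. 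Since also $\sum_{v\in A}d_{c_0}(v)=|E_{c_0}|=N$, the bipartite graph joining each $\psi_j$ to the colour-$c_0$ edges of $G$ incident with $\psi_j(t_{i_0})$ is a disjoint union of balanced complete bipartite graphs, so it has a perfect matching $\psi_j\leftrightarrow e_j$. Extending $\psi_j$ by sending $\ell$ to the endpoint of $e_j$ other than $\psi_j(t_{i_0})$ gives a homomorphism $\phi_j\colon T\to G$ which is edge-injective, since $e_j$ has colour $c_0$ and this colour is not used by $\psi_j$, and which satisfies $\phi_j(T_A)\subseteq A$. The edge sets of the copies $\phi_j(T)$ are those of the $\psi_j(T')$ together with the $e_j$, so they partition $E(G)$, completing the induction.

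I expect the only genuine difficulty to be organisational: one must carry the orientation $\phi(T_A)\subseteq A$ through the induction, since without it the identity $\#\{j:\psi_j(t_{i_0})=v\}=d_{c_0}(v)$ breaks down, and one must treat separately the case in which the neighbour $t_{i_0}$ of the deleted leaf becomes a leaf of $T'$. Beyond that, the argument rests only on the two counting facts above together with Hall's theorem in the trivial form of a disjoint union of balanced complete bipartite graphs.
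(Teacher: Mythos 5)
Your overall strategy is the same as the paper's (induction on $m$: delete the colour class of a leaf edge, then reattach it using the degree identity at the leaf's neighbour), but as written there is a gap at the key counting step. Your strengthened induction statement carries only the orientation $\phi(T_A)\subseteq A$, $\phi(T_B)\subseteq B$; it says nothing about how the copies relate to the edge-colouring of $G$. Yet your bijection claim --- that $j\mapsto\psi_j(e_{c'})$ maps $\{j:\psi_j(t_{i_0})=v\}$ onto the set of colour-$c'$ edges of $G'$ at $v$ --- needs that each $\psi_j$ sends the edge of $T'$ coloured $c'$ to an edge of $G'$ coloured $c'$. An arbitrary oriented $T'^*$-decomposition need not have this property, and then the identity $\#\{j:\psi_j(t_{i_0})=v\}=d_{c_0}(v)$ can fail outright. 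Concretely, take $T=P_4$ with path $w,x,y,z$, $T_A=\{x,z\}$, colours $1,2,3$ on $wx,xy,yz$, and $G$ with $A=\{a_1,a_2\}$, $B=\{b\}$, two colour-1 and two colour-2 edges between $a_1$ and $b$, and two colour-3 edges between $a_2$ and $b$ (multiple edges are allowed here). This colouring is $T$-equitable, but after deleting $c_0=1$ the graph $G'$ has the oriented $T'^*$-decomposition whose two copies are the colour-2 pair and the colour-3 pair of parallel edges; only one copy has its $x$-image at $a_1$, while $d_1(a_1)=2$, so your matching step cannot reattach the colour-1 edges to this decomposition.

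The repair is exactly the clause the paper carries through its induction: prove the stronger statement that $G$ decomposes into \emph{coloured} homomorphic copies, i.e.\ copies $\phi(T)$ such that for every colour $c$ the image of the edge of $T$ coloured $c$ is an edge of $G$ coloured $c$, in addition to your orientation condition. Your construction does preserve this clause (the $\psi_j$ are colour-compatible by induction and the new edge $e_j$ has colour $c_0$, which is the colour of $\ell t_{i_0}$), so once it is added to the induction hypothesis your bijection, the equal-size observation, and the matching argument all go through. Apart from this, the proof coincides with the paper's; the only inessential difference is that the paper fixes the deleted leaf edge to have its leaf in $T_B$ (colour $m$, neighbour $t_1\in T_A$), whereas you allow either side and treat $t_{i_0}\in T_B$ by exchanging the roles of $A$ and $B$, which is fine since the definition of $T$-equitable is symmetric under that exchange.
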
  
\begin{proof}
We prove by induction on $m$ that we can decompose $G$ into coloured homomorphic copies of $T$ so that, for every homomorphic copy of $T$, the vertices corresponding to $T_A$ lie in $A$. There is nothing to show for $m=1$, so we may assume $m\geq 2$ and that the statement is true for all smaller $m$. We may also assume that the edge of $T$ coloured $m$ is incident to $t_1$ in $T_A$ and to a leaf in $T_B$. Let $T'$ be the tree we get by deleting this edge, and let $G'$ be the graph we get by deleting all edges with colour $m$ in $G$. Now $G'$ satisfies the corresponding equations for $T'$, so we can find a decomposition into coloured homomorphic copies of $T'$ that are all oriented the same way. Since every vertex $v$ of $G$ in $A$ satisfies $d_j(v)=d_k(v)$ for $j,k\in T(1)$, the number of copies of $T'$ where $v$ is the image of $t_1$ is the same as the number of edges coloured $m$ at $v$. Thus we can extend every homomorphic copy of $T'$ to a homomorphic copy of $T$, resulting in a $T^*$-decomposition.
\end{proof}

As noted at the end of Section 2, it suffices to prove Theorem~\ref{thm-treedecomp} for bipartite graphs~$G$ with vertex classes $A$ and $B$ where all vertices in $A$ have degree divisible by $m$. In this situation we can construct an edge-colouring where the vertices in $A$ satisfy an even stronger condition than required in the lemma above, namely that they have the same degree in each colour. In other words, we are going to construct an edge-colouring with colours $1,\ldots ,m$ such that
\begin{align}\label{degequal}
d_i(v)=\frac{1}{m}d(v)
\end{align}
for $v\in A$, $i\in\{1,\ldots ,m\}$, while the vertices in $B$ satisfy the same equations as in Lemma~\ref{equivtreedecomp}. This means that in the corresponding $T^*$-decomposition, for every $x,y\in T_A$ and $v\in A$, the number of homomorphic copies where $v$ is the image of $x$ is the same as the number of homomorphic copies where $v$ is the image of $y$.

The existence of such an edge-colouring is an easy consequence of the following Theorem, which is proved in Section 4.

\begin{theorem}\label{thm-technical}
For all natural numbers $m$ and $\lambda$, there exists a natural number $f(m,\lambda)$ such that the following holds:

If $m_1,\ldots, m_{b+1}$ are positive integers satisfying $m=m_1 + \ldots + m_{b+1}$, and if $G$ is a $f(m,\lambda)$-edge-connected bipartite graph on vertex classes $A$ and $B$ in which all vertices in $A$ have degree divisible by $m$, then we can decompose $G$ into $b+1$ spanning $\lambda$-edge-connected graphs $G_1,\ldots, G_{b+1}$ such that
\begin{itemize}
\item
$d(v,G_i)=\frac{m_i}{m}d(v,G)$ for $v\in A$ and $i\in\{1,\ldots ,b+1\}$, and
\item
$d(v,G_i)$ is divisible by $m_i$ for $v\in B$ and $i\in\{1,\ldots ,b\}$.
\end{itemize}
\end{theorem}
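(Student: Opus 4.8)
The plan is to build the decomposition iteratively, peeling off one graph $G_i$ at a time, and then repeatedly invoking Proposition~\ref{prop-degdivbym} (or its proof via Theorem~\ref{weak-k-flow}) together with the spanning-tree-packing results. The key point is that the two bullet conditions on $G_i$ are exactly of the type that a single application of Theorem~\ref{weak-k-flow} can arrange: we want to split the edges at $A$ in a prescribed ratio $m_i:(m-m_i)$, which (since all degrees in $A$ are divisible by $m$) is a divisibility-type constraint modulo $m$, while simultaneously controlling degrees modulo $m_i$ at $B$. First I would reserve, out of the huge edge-connectivity guaranteed by $f(m,\lambda)$, a pool of $4\cdot\lambda\cdot(b+1)$ edge-disjoint spanning trees via Theorem~\ref{nash-williams}; distributing these in blocks of $4\lambda$ and applying Theorem~\ref{ellingham} (exactly as in Lemma~\ref{con+smalldeg}) produces, inside $G$, $b+1$ edge-disjoint spanning $\lambda$-edge-connected subgraphs $H_1,\dots,H_{b+1}$, each with all degrees small, say $d(v,H_i)<d(v,G)/N$ for a large constant $N$ we get to choose. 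These $H_i$ will be the ``connectivity cores'' of the $G_i$; everything else is assigned by an orientation argument.

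Next, on the remaining graph $G':=G-\bigcup_i H_i$, I want to apply Theorem~\ref{weak-k-flow} with modulus $k:=m\cdot\mathrm{lcm}(m_1,\dots,m_b)$ (or one can iterate with smaller moduli). For $v\in A$ set a prescribed outdegree so that, after adding back $H_1,\dots,H_{b}$, the degree of $v$ in the first $b$ parts is forced to the exact values $\frac{m_i}{m}d(v,G)-d(v,H_i)$; this is a congruence condition mod $m$ because $d(v,G)$ is divisible by $m$, and since $d(v,H_i)$ is small the unique nonnegative solution in the right range is pinned down once we know the orientation realizes the prescribed residue. The subtlety is that an orientation only controls the \emph{net} outdegree, not how many edges go into each of the $b+1$ parts; so the cleaner route is to peel parts off one at a time: use Theorem~\ref{weak-k-flow} on $G'$ to two-colour its edges (orient, then let ``$G_1$-edges'' be those directed from $A$) so that $G_1:=H_1\cup\{$these edges$\}$ has $d(v,G_1)=\frac{m_1}{m}d(v,G)$ for $v\in A$ and $d(v,G_1)\equiv 0\pmod{m_1}$ for $v\in B$, exactly as in the proof of Proposition~\ref{prop-degdivbym}. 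Then $G-G_1$ is again bipartite, still highly connected (we budgeted enough trees), and every vertex of $A$ has degree divisible by $m-m_1$; recurse on $G-G_1$ with the tuple $(m_2,\dots,m_{b+1})$ and target modulus $m-m_1$. After $b$ steps the leftover graph is $G_{b+1}$, and the degree identity at $A$ holds automatically because all the others were hit exactly.

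To make the recursion valid I need to track two things. The edge-connectivity budget: each of the $b$ rounds consumes $3\cdot(\text{current modulus})-2$ worth of connectivity for the flow theorem plus $4\lambda$ spanning trees per part for the core, so choosing $f(m,\lambda)$ to dominate $\sum_{i}\bigl(3(m-m_1-\dots-m_{i-1})-2\bigr)+4\lambda(b+1)$ — crudely, something like $f(m,\lambda):=4\lambda m + 3m^2$ — suffices, and one checks $G-G_1-\dots-G_i$ still has the required number of edge-disjoint spanning trees via Theorem~\ref{nash-williams}. The degree bookkeeping at $A$: after removing $G_1,\dots,G_i$ the degree of $v\in A$ is $d(v,G)\bigl(1-\frac{m_1+\dots+m_i}{m}\bigr)=\frac{m-m_1-\dots-m_i}{m}d(v,G)$, which is an integer multiple of the new modulus, so the hypothesis of the next round holds. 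I expect the main obstacle to be precisely this interface between the ``exact'' requirement at $A$ and the ``modular'' output of Theorem~\ref{weak-k-flow}: one must verify that the prescribed residue, combined with the a priori bounds $0\le d(v,G_i)\le d(v,G)$ and the smallness of $d(v,H_i)$, leaves a \emph{unique} feasible value, so that congruence control upgrades to equality. Once that is pinned down for one round, the induction runs smoothly.
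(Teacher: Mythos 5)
Your overall architecture (connectivity cores from spanning-tree packing plus an orientation step, peeling one part off at a time and recursing) matches the paper's, but there is a genuine gap at exactly the point you flag as the ``main obstacle'', and your proposed resolution of it does not work. Theorem~\ref{weak-k-flow} only controls outdegrees \emph{modulo} $k$, and the a priori bounds $0\le d(v,G_i)\le d(v,G)$ do not make the feasible value unique: a vertex $v\in A$ has degree $d(v,G)$ that is merely divisible by $m$ and can be arbitrarily large compared to any modulus you choose, so the residue class of $d(v,G_1)$ contains many admissible values ($\tfrac{m_1}{m}d(v,G)$, $\tfrac{m_1}{m}d(v,G)\pm k$, \dots), and ``congruence control'' never upgrades to equality. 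The paper's mechanism for exactness at $A$ is different and is the one tool you never invoke: Mader's lifting theorem (Theorem~\ref{mader}). In Lemma~\ref{lemma-1/2} every vertex of $A$ (having even degree) is lifted away, the orientation theorem is applied only to the lifted graph so that only $B$-vertices receive modular prescriptions, and upon expanding each lifted edge back into a path of length $2$ every $A$-vertex automatically has in-degree equal to out-degree. Exactness at $A$ is thus forced structurally, giving precisely half the edges, and the fraction $1/m$ is then obtained by first carving out (around a small-degree connected core) a subgraph in which each $A$-vertex has degree exactly $\tfrac{2}{m}d(v,G)$ and halving that.

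Two smaller points. First, peeling off $G_1$ directly at the fraction $m_1/m$ is also problematic even with the lifting trick, since the halving argument needs $2m_1\le m$; the paper avoids this by always peeling at fraction $1/m$ (Lemma~\ref{lemma-1/m+lambda} and Proposition~\ref{prop-technical} produce $m$ equal pieces, with residues prescribed modulo $k=m_1\cdots m_{b+1}$) and only afterwards grouping $m_i$ of the equal pieces into $G_i$, whence $d(v,G_i)\equiv m_iq(v)\equiv 0\pmod{m_i}$ at $B$. Second, your connectivity bookkeeping for the recursion is too optimistic as stated: after the orientation step the leftover graph has no guaranteed edge-connectivity beyond the cores you reserved, so the budget must grow multiplicatively with each level of the recursion (the paper's $f_k(m,\lambda)=16f_k(m-1,\lambda)m^2+24km$), not additively as in your $4\lambda m+3m^2$.
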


Notice that it is not possible to achieve that also $d(v,G_{b+1})$ is divisible by $m_{b+1}$ for $v\in B$, since for example all of $m_1,\ldots ,m_{b+1}$ could be even, but $B$ could have vertices of odd degree.

Using Theorem~\ref{thm-technical}, we can easily construct a $T$-equitable edge-colouring of $G$.
\begin{theorem}\label{thm-T-equitable}
Let $G$ be a bipartite graph on vertex classes $A$ and $B$ in which all vertices in $A$ have degree divisible by $m$. If $G$ is $f(m,md)$-edge-connected, where $f$ denotes the function defined by Theorem~\ref{thm-technical}, then $G$ admits a $T$-equitable edge-colouring such that the minimum degree in each colour is at least $d$.
\end{theorem}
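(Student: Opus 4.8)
The plan is to deduce the theorem from Theorem~\ref{thm-technical} by handing it the partition of $m$ that mirrors the colour classes of $T$ at the non-leaves of $T_B$, and then to split each resulting piece of $G$ into single colours by a classical balanced-colouring fact for bipartite graphs. Concretely, set $m_i:=|T(a+i)|$ for $i\in\{1,\dots,b\}$ and $m_{b+1}:=|T(a+b+1)|$. Every edge of $T$ meets $T_B$ in exactly one vertex, which is either one of $t_{a+1},\dots,t_{a+b}$ or a leaf; hence $m_1,\dots,m_{b+1}$ are positive integers summing to $m$, the positivity of $m_{b+1}$ being exactly the assumption that $T_B$ contains a leaf. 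Now apply Theorem~\ref{thm-technical} to $G$ with parameter $\lambda=md$ and the $b+m_{b+1}$ parts
\[
 m_1,\ \dots,\ m_b,\ \underbrace{1,\ \dots,\ 1}_{m_{b+1}} .
\]
Since $G$ is $f(m,md)$-edge-connected, this decomposes $G$ into spanning $md$-edge-connected graphs $G_1,\dots,G_b$ and $H_1,\dots,H_{m_{b+1}}$ with $d(v,G_i)=\tfrac{m_i}{m}d(v,G)$ and $d(v,H_k)=\tfrac1m d(v,G)$ for every $v\in A$, and with $d(v,G_i)$ divisible by $m_i$ for every $v\in B$ and every $i\le b$ (Theorem~\ref{thm-technical} provides this divisibility at $B$ for all but the last of the parts). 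No information about the $H_k$ at vertices of $B$ is needed — and, as the remark after Theorem~\ref{thm-technical} shows, none could be expected — since the colours of $T(a+b+1)$ carry no constraint at $B$ in the definition of a $T$-equitable colouring.

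Next, colour the pieces. Give each $H_k$ a distinct colour from $T(a+b+1)$. For $i\le b$ the graph $G_i$ is bipartite and all of its degrees are divisible by $m_i$: at a vertex of $A$ because $d(v,G_i)=m_i\cdot\tfrac{d(v,G)}{m}$ and $m\mid d(v,G)$, and at a vertex of $B$ by the conclusion above. So colour the edges of $G_i$ with the $m_i$ colours of $T(a+i)$ using the classical fact that a bipartite multigraph in which every vertex degree is divisible by $r$ has an edge-colouring with $r$ colours in which each vertex $v$ has degree exactly $d(v)/r$ in every colour. Altogether this assigns each colour $1,\dots,m$ to exactly one of $G_1,\dots,G_b,H_1,\dots,H_{m_{b+1}}$, so every edge of $G$ receives exactly one colour.

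It remains to check the three requirements. For $v\in A$ and any colour $j$ we have $d_j(v)=\tfrac1m d(v,G)$: if $j\in T(a+i)$ then $d_j(v)=d(v,G_i)/m_i$, and if $j\in T(a+b+1)$ then $d_j(v)=d(v,H_k)$. Thus \eqref{degequal} holds, which in particular yields the condition on $A$ in the definition of a $T$-equitable colouring. For $v\in B$, $i\in\{a+1,\dots,a+b\}$ and $j,k\in T(i)$, all edges at $v$ of colours $j$ and $k$ lie in the single graph $G_{i-a}$, and the balanced colouring gives $d_j(v)=d_k(v)$; this is the condition on $B$. Finally, every colour class is either some $H_k$, of minimum degree at least $md\ge d$, or a colour class inside some $G_i$, of minimum degree at least $\bigl(\min_v d(v,G_i)\bigr)/m_i\ge md/m_i\ge d$ since $m_i\le m$. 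Hence the colouring is $T$-equitable with minimum degree at least $d$ in each colour.

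Essentially everything above is bookkeeping; the two points that need thought are the choice of the parts fed to Theorem~\ref{thm-technical} — in particular splitting $T(a+b+1)$ into singleton parts, so that the unavailable divisibility at $B$ is never required there — and the classical balanced-colouring fact, which is where any real content lies. Bipartiteness there is essential (a triangle has all degrees divisible by $2$ but admits no balanced $2$-edge-colouring). I would prove it by induction on $r$: for $r\ge2$, find a spanning subgraph $F$ of the bipartite multigraph $H$ with $d(v,F)=d(v,H)/r$ for every $v$ — such a degree-constrained subgraph exists because the relevant Hall/max-flow condition is immediate from $d(v,H)/r\le d(v,H)$ — then give $F$ one new colour and apply the induction hypothesis to $H-F$, all of whose degrees are divisible by $r-1$.
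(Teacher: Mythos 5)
Your proposal is correct and follows essentially the same route as the paper: apply Theorem~\ref{thm-technical} with the parts $|T(a+i)|$ and then colour each piece by a K\"onig-type balanced edge-colouring, checking the degrees exactly as you do. The only (harmless) deviations are that you refine the last part $T(a+b+1)$ into singleton parts inside the application of Theorem~\ref{thm-technical}, which sidesteps the slightly irregular splitting the paper performs on $G_{b+1}$, and that you quote the balanced-colouring fact directly, whereas the paper derives it on the spot by splitting vertices into degree-$m_i$ pieces and applying K\"onig's theorem to the resulting regular bipartite graph.
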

\begin{proof}
For $i\in\{1,\ldots ,b+1\}$, let $m_i=|T(a+i)|$.  Notice that every colour appears in precisely one of $T(a+1),\ldots , T(a+b+1)$, so we have $m=m_1 + \ldots + m_{b+1}$. Thus, we can apply Theorem~\ref{thm-technical} to get a decomposition of $G$ into $md$-edge-connected graphs $G_1,\ldots ,G_{b+1}$ such that $d(v,G_i)=\frac{m_i}{m}d(v,G)$ for $v\in A$, $i\in\{1,\ldots ,b+1\}$, and $d(v,G_i)$ is divisible by $m_i$ for $v\in B$, $i\in\{1,\ldots ,b\}$.

For $i\in\{1,\ldots ,b\}$, every vertex of $G_i$ has degree divisible by $m_i$, so we can split vertices to get an $m_i$-regular graph $G_i'$. 
We also split each vertex in $G_{b+1}$ into vertices of degree $m_{b+1}$ and possibly one vertex of degree less than $m_{b+1}$, resulting in a graph $G_{b+1}'$.
A well-known result by K\"onig states that every $k$-regular bipartite graph has a proper edge-colouring with $k$ colours, see for example Proposition 5.3.1. in~\cite{diestel}.
Thus, there exists a proper edge-colouring of $G_i'$ with the $m_i$ colours in $T(a+i)$ for every $i\in \{1,\ldots ,b+1\}$. 
This corresponds to an edge-colouring of $G_i$ such that
$$d_j(v,G_i) = \frac{1}{m_i}d(v,G_i) = \frac{1}{m}d(v,G) $$
for $j\in T(a+i)$ and $v\in A$. By construction, we also have $d_j(v)=d_k(v)$ for $v\in B$, $i\in\{a+1,\ldots, a+b\}$, $j,k\in T(i)$, so the colouring is $T$-equitable. Since the minimum degree of $G_i$ is at least $md$, the minimum degree in each color in $G$ is at least $d$. 
\end{proof}
Now the main result of this paper follows immediately.
\begin{proof}[Proof of Theorem~\ref{thm-treedecomp}]
We may assume by Proposition~\ref{prop-degdivbym} and Theorem~\ref{bipartite} that $G$ is a bipartite graph on vertex classes $A$ and $B$, and that every vertex in $A$ has degree divisible by $m$. If $G$ is $f(m,1)$-edge-connected, where $f$ denotes the function defined by Theorem~\ref{thm-technical}, then $G$ admits a $T$-equitable colouring by Theorem~\ref{thm-T-equitable}. Thus, $G$ has a $T^*$-decomposition by Lemma~\ref{equivtreedecomp}.  
\end{proof}

\section{Proof of Theorem~\ref{thm-technical}}

The following lemma is an easy application of Theorem~\ref{mader} and Theorem~\ref{weak-k-flow}. It is a slight generalization of an argument that was already used in~\cite{bistars} to prove Conjecture~\ref{conj-treedecomp} for a class of bistars.

\begin{lemma}\label{lemma-1/2}  
Let $G$ be a $(3k-2)$-edge-connected bipartite graph on classes $A$ and $B$, where each vertex in $A$ has even degree. For every function $p:B\to \Z /k\Z$ satisfying
\begin{align*}
\sum_{v\in B} p(v) \equiv \frac{e(G)}{2} \mbox{\hspace{10mm}(mod k)}\,,
\end{align*}
there exists a subgraph $H$ of $G$ with 

$d(v,H)=\frac{1}{2}d(v,G)$ for $v\in A$, and 

$d(v, H)\equiv p(v) \mbox{ (mod k)}$ for $v\in B$.
\end{lemma}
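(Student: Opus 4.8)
The plan is to remove the vertices of $A$ by repeated liftings, reduce to the weak $k$-flow theorem on the resulting multigraph on $B$, and then translate an orientation of that multigraph back into the desired subgraph $H$. We may assume $k\ge 2$; when $k=1$ every congruence is vacuous and one obtains $H$ by choosing, for each $v\in A$, any $\frac{1}{2}d(v,G)$ of the edges at $v$ (these choices are independent because $G$ is bipartite, so each edge has a unique endpoint in $A$).

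So suppose $k\ge 2$. Then $3k-2\ge 4$, so $G$ has no cut-edge, and every $v\in A$ has even degree; hence Theorem~\ref{mader} yields a connectivity-preserving lifting at $v$. Since $A$ is independent, a lifting at $v$ replaces the edges at $v$ by a perfect matching on the multiset of neighbours of $v$ in $B$, leaving all other vertices of $A$ — and all degrees of vertices of $B$ — untouched, and the new graph is again $(3k-2)$-edge-connected, hence again cut-edge-free. Lifting the vertices of $A$ one after another therefore produces a $(3k-2)$-edge-connected multigraph $G'$ on vertex set $B$. For each $v\in A$ the lifting at $v$ partitions the $d(v,G)$ edges at $v$ into $\frac{1}{2}d(v,G)$ pairs, and I record for every edge of $G'$ the pair $\{vx,vy\}$ of edges of $G$ (at a common vertex $v\in A$) from which it arose; in particular $e(G')=\sum_{v\in A}\frac{1}{2}d(v,G)=\frac{1}{2}e(G)$.

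Next I would apply Theorem~\ref{weak-k-flow} to $G'$ with the given $p\colon B\to\Z/k\Z$: the hypothesis $\sum_{v\in B}p(v)\equiv \frac{1}{2}e(G)\pmod k$ is precisely the condition $\sum_{v\in B}p(v)\equiv e(G')\pmod k$, so there is an orientation of $G'$ in which every $x\in B$ has outdegree congruent to $p(x)$ modulo $k$. Finally I build $H$ as follows: for each edge of $G'$, oriented (say) from $x$ to $y$ and coming from the pair $\{vx,vy\}$ at $v\in A$, put the tail edge $vx$ into $H$. Then from each pair at $v$ exactly one edge is chosen, so $d(v,H)=\frac{1}{2}d(v,G)$ for $v\in A$; and the edges of $H$ at a vertex $x\in B$ are exactly those corresponding to the edges of $G'$ directed out of $x$, so $d(x,H)$ equals the outdegree of $x$ in $G'$, which is $\equiv p(x)\pmod k$. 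This $H$ has the required properties.

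The computations are all routine; the only points needing care are bookkeeping ones: that the correspondence between $E(G')$ and pairs of edges of $G$ is well defined after the successive liftings, and that parallel edges $vx$, $vx$ paired by a lifting create a loop at $x$ in $G'$. The latter is a minor nuisance rather than a real obstacle: each such loop can be deleted after decreasing $p(x)$ by $1$ — which preserves the congruence, since it decreases $e(G')$ by $1$ as well — and in the unlifting step one of the two parallel edges $vx$ is placed in $H$, restoring the missing $1$ to $d(x,H)$. I do not expect any genuinely difficult step beyond this.
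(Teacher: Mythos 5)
Your proposal is correct and takes essentially the same route as the paper's proof: lift the even-degree vertices of $A$ using Theorem~\ref{mader}, apply Theorem~\ref{weak-k-flow} to the resulting graph on $B$, and translate the orientation back to $G$ by selecting the tail edge of each lifted pair (the paper phrases this as orienting $G$ so that each directed edge of the lifted graph becomes a directed path of length 2 and then taking the edges oriented from $B$ to $A$). Your additional remarks on the case $k=1$ and on loops arising from lifting parallel edges are minor bookkeeping points that the paper leaves implicit, and your treatment of them is sound.
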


\begin{proof}
By Theorem~\ref{mader}, we can lift each vertex in $A$ so that the resulting graph $G'$ is still $(3k-2)$-edge-connected. By Theorem~\ref{weak-k-flow}, we can orient the edges of $G'$ such that each vertex $v$ has outdegree congruent to $p(v)$ modulo $k$. We can also orient the edges of $G$ such that every directed edge of $G'$ corresponds to a directed path of length 2 in $G$. This yields an orientation of $G$ where each vertex $v$ in $B$ has an outdegree congruent to $p(v)$ modulo $k$, and each vertex in $A$ has the same out- and indegree. Now the subgraph consisting of the edges oriented from $B$ to $A$ is as required.
\end{proof}

The case where we want the subgraph $H$ to contain only $1/m$ of the edges at every vertex in $A$, for some $m\geq 3$, can easily be reduced to the case $m=2$.

\begin{proposition}\label{prop-1/m}
Let $m$ and $k$ be natural numbers with $m\geq 2$, and let $G$ be a bipartite graph on classes $A$ and $B$ with $12km$ edge-disjoint spanning trees, where each vertex in $A$ has degree divisible by $m$. For every function $p:B\to \Z /k\Z$ satisfying
\begin{align*}
\sum_{v\in B} p(v) \equiv \frac{e(G)}{m} \mbox{\hspace{10mm}(mod k)}\,,
\end{align*}
there is a subgraph $H$ of $G$ with 

$d(v,H)=\frac{1}{m}d(v,G)$ for $v\in A$ and 

$d(v, H)\equiv p(v) \mbox{ (mod k)}$ for $v\in B$.
\end{proposition}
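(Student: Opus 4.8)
The plan is to reduce the statement to the already‑established case $m=2$, that is, to Lemma~\ref{lemma-1/2}. The point is that Lemma~\ref{lemma-1/2} applies not just to $G$ but to \emph{any} sufficiently edge‑connected spanning subgraph of $G$ in which every vertex of $A$ has even degree, and it imposes no condition whatsoever on the degrees in $B$. So the goal becomes: produce a spanning subgraph $G_1\subseteq G$ that is $(3k-2)$‑edge‑connected and has $d(v,G_1)=\tfrac{2}{m}d(v,G)$ for every $v\in A$. Writing $c_v:=d(v,G)/m$, this common value is $2c_v$, hence even, and $e(G_1)=\sum_{v\in A}d(v,G_1)=\tfrac{2}{m}\sum_{v\in A}d(v,G)=\tfrac{2}{m}e(G)$, so $e(G_1)/2=e(G)/m$. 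Therefore the hypothesis $\sum_{v\in B}p(v)\equiv e(G)/m\pmod k$ is \emph{exactly} the congruence needed to apply Lemma~\ref{lemma-1/2} to $G_1$ and $p$, and the subgraph $H\subseteq G_1$ it returns satisfies $d(v,H)=\tfrac12 d(v,G_1)=c_v=\tfrac1m d(v,G)$ for $v\in A$ and $d(v,H)\equiv p(v)\pmod k$ for $v\in B$, which is what we want.

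It remains to build $G_1$, and this is where the $12km$ edge‑disjoint spanning trees get spent. First I would apply Lemma~\ref{con+smalldeg} with the roles of $k$ and $q$ played by $m$ and $3k$ respectively — this needs precisely $4\cdot m\cdot 3k=12km$ edge‑disjoint spanning trees — to obtain a spanning, $3k$‑edge‑connected subgraph $S$ of $G$ with $d(v,S)<d(v,G)/m$ for every vertex $v$; in particular $d(v,S)\le c_v-1<2c_v$ for all $v\in A$. Then I would enlarge $S$ to $G_1$ by adding, for each $v\in A$, exactly $2c_v-d(v,S)$ further edges incident to $v$ taken from $E(G)\setminus E(S)$. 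Enough such edges exist because $d(v,G)-d(v,S)=mc_v-d(v,S)\ge 2c_v-d(v,S)$ — this is the one place $m\ge 2$ is used — and since $G$ is bipartite the choices made at distinct vertices of $A$ use disjoint edge sets, so they can all be carried out at once. The resulting $G_1$ has $d(v,G_1)=2c_v$ for every $v\in A$ and contains the $3k$‑edge‑connected graph $S$, hence is $(3k-2)$‑edge‑connected, and the reduction above finishes the proof.

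I do not expect a genuine obstacle: once one sees that ``select $1/m$ of the edges at each vertex of $A$'' can be split as ``pass to a highly connected subgraph keeping $2/m$ of the edges at each vertex of $A$'' followed by ``halve'', the two remaining tasks are routine — the greedy top‑up of degrees in $G_1$, and the bookkeeping ensuring the congruence $\sum_{v\in B}p(v)\equiv e(G)/m$ is the one required after passing to $G_1$. The only quantitative point to watch is that the spanning‑tree budget is tight: Lemma~\ref{con+smalldeg} with parameters $(m,3k)$ consumes all $12km$ trees, which is exactly why the hypothesis is phrased that way (one could instead use $q=3k-2$ and slightly fewer trees, but there is no need). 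A harmless technical caveat is that $G$ may be assumed to have no isolated vertices, since possessing a spanning tree already forces $G$ to be connected.
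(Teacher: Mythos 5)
Your proposal is correct and follows essentially the same route as the paper: apply Lemma~\ref{con+smalldeg} (with parameters $m$ and $3k$) to get a spanning $3k$-edge-connected subgraph with $A$-degrees below $\frac{1}{m}d(v,G)$, top it up to $A$-degrees exactly $\frac{2}{m}d(v,G)$, and then invoke Lemma~\ref{lemma-1/2}. You merely spell out the routine verifications (the congruence $e(G_1)/2=e(G)/m$, the feasibility of the top-up using $m\ge 2$, and the inherited edge-connectivity) that the paper leaves implicit.
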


\begin{proof} 
By Lemma~\ref{con+smalldeg}, we can find a spanning $3k$-edge-connected subgraph $G'$ with $d(v,G') < \frac{1}{m}d(v,G)$. 
We add some edges to get a graph $G''$ in which all vertices in $A$ have degree exactly $\frac{2}{m}d(v,G)$. Now we can use Lemma~\ref{lemma-1/2} to find a subgraph $H$  of~$G''$ with $d(v,H) = \frac{1}{2}d(v,G'') = \frac{1}{m}d(v,G)$ for $v\in A$, and $d(v,H) \equiv p(v)$ modulo $k$ for $v\in B$.
\end{proof}

To get a decomposition into several graphs as in Theorem~\ref{thm-technical}, we want to use induction. To do so, we need that $G - E(H)$ still has large edge-connectivity. The following lemma shows that this can be achieved by increasing the edge-connectivity of~$G$.

\begin{lemma}\label{lemma-1/m+lambda}
Let $k$, $m$ and $\lambda$ be natural numbers with $m\geq 2$. Let $G$ be a bipartite graph on classes $A$ and $B$ with $8\lambda m^2 + 12km$ edge-disjoint spanning trees, where each vertex in $A$ has degree divisible by $m$. For every function $p:B\to \Z /k\Z $ satisfying
\begin{align*}
\sum_{v\in B} p(v) \equiv \frac{e(G)}{m} \mbox{\hspace{10mm}(mod k)}\,,
\end{align*}
there is a decomposition of $G$ into $\lambda$-edge-connected subgraphs $G_1$ and $G_2$ with 

$d(v,G_1)=\frac{1}{m}d(v,G)$ for $v\in A$ and 

$d(v,G_1)\equiv p(v) \mbox{(mod k)}$ for $v\in B$.
\end{lemma}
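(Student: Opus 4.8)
The plan is to peel off, inside $G$, two edge-disjoint spanning $\lambda$-edge-connected subgraphs $R_1,R_2$ of small degree — $R_1$ destined to lie entirely in $G_1$, $R_2$ entirely in $G_2$ — together with an auxiliary spanning $(3k-2)$-edge-connected subgraph $Q$ of small degree, edge-disjoint from both, and then to split what remains by a reduction to Lemma~\ref{lemma-1/2}. Writing $t(v):=\tfrac1m d(v,G)-d(v,R_1)$ for $v\in A$, the aim is a subgraph $H^*$ of $G':=G-E(R_1)-E(R_2)$ with $d(v,H^*)=t(v)$ for $v\in A$ and $d(v,H^*)\equiv p(v)-d(v,R_1)\pmod k$ for $v\in B$; then $G_1:=R_1\cup H^*$ and $G_2:=G-E(G_1)$ will be the desired decomposition. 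Indeed $d(v,G_1)=d(v,R_1)+t(v)=\tfrac1m d(v,G)$ for $v\in A$ and $d(v,G_1)\equiv p(v)\pmod k$ for $v\in B$; and since $R_1\subseteq G_1$ and $R_2$ is edge-disjoint from $R_1$ and from $H^*\subseteq G'$, we have $R_2\subseteq G_2$, so both $G_1$ and $G_2$ are $\lambda$-edge-connected.

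First I would build the reserves with Lemma~\ref{con+smalldeg}: $4m(3k-2)$ of the spanning trees give a spanning $(3k-2)$-edge-connected $Q$ with $d(v,Q)<\tfrac1m d(v,G)$, and two further disjoint batches of $8m\lambda$ spanning trees give spanning $\lambda$-edge-connected subgraphs $R_1^0,R_2^0$, edge-disjoint from $Q$ and from each other, with $d(v,R_i^0)<\tfrac1{2m}d(v,G)$; the total $12km-8m+16m\lambda$ is at most $8\lambda m^2+12km$ for every $m\ge 2$. I would then \emph{pad} $R_1^0$: choose a subgraph $P$ of $G-E(Q)-E(R_1^0)-E(R_2^0)$ with $d(v,P)=\max\{0,\,d(v,R_2^0)-d(v,R_1^0)\}$ for $v\in A$ (feasible vertex by vertex because $d(v,G)-d(v,Q)\ge 2d(v,R_2^0)$, using $d(v,Q)<\tfrac1m d(v,G)$ and $d(v,R_2^0)<\tfrac1{2m}d(v,G)$ and $m\ge 2$), and set $R_1:=R_1^0\cup P$, $R_2:=R_2^0$. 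Now $R_1,R_2,Q$ are pairwise edge-disjoint, $R_1\supseteq R_1^0$ and $R_2\supseteq R_2^0$ are spanning $\lambda$-edge-connected, $d(v,R_1)=\max\{d(v,R_1^0),d(v,R_2^0)\}<\tfrac1{2m}d(v,G)$, and — the one place padding is used — $d(v,R_2)\le d(v,R_1)$ for every $v\in A$.

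Now I would carry out the cut. With $G'=G-E(R_1)-E(R_2)$ one has $t(v)>\tfrac1{2m}d(v,G)>0$, $2t(v)$ even, $d(v,Q)<\tfrac1m d(v,G)<2t(v)$, and $2t(v)\le d(v,G')$ for all $v\in A$; the last inequality rearranges to $d(v,R_2)-d(v,R_1)\le\tfrac{m-2}m d(v,G)$, which holds since the left side is at most $0$. Since $Q\subseteq G'$, I would enlarge $Q$ to a subgraph $G''$ of $G'$ with $d(v,G'')=2t(v)$ for $v\in A$ by greedily adding edges of $G'-E(Q)$ at the vertices of $A$; then $G''$ is spanning, $(3k-2)$-edge-connected, and all its $A$-degrees are even. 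Applying Lemma~\ref{lemma-1/2} to $G''$ with $p'(v):=p(v)-d(v,R_1)$ on $B$ yields a subgraph $H^*$ of $G''$ with $d(v,H^*)=\tfrac12 d(v,G'')=t(v)$ on $A$ and $d(v,H^*)\equiv p'(v)\pmod k$ on $B$; its hypothesis is met because, using $e(R_1)=\sum_{v\in A}d(v,R_1)=\sum_{v\in B}d(v,R_1)$ and $\sum_{v\in B}p(v)\equiv\tfrac{e(G)}m\pmod k$, one gets $\sum_{v\in B}p'(v)\equiv\tfrac{e(G)}m-e(R_1)=\sum_{v\in A}t(v)=\tfrac{e(G'')}2\pmod k$. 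Finally, with $G_1=R_1\cup H^*$ and $G_2=G-E(G_1)$, the four required properties follow from the first paragraph.

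The part I expect to be the real obstacle is exactly the inequality $2t(v)\le d(v,G')$ at every vertex of $A$: since $2t(v)$ can be close to $d(v,G)$, the graph $G-E(R_1)-E(R_2)$ must still carry enough edges at each $v\in A$, and when $m=2$ (where $\tfrac{m-2}m d(v,G)=0$) this forces $R_1$ to dominate $R_2$ pointwise on $A$, which is the reason for padding $R_1^0$. One must also take care to reserve $Q$ \emph{before} the padding step, since the padding edges need not be sparse and could otherwise damage the spanning-tree bookkeeping; and to fit the three reservations inside the $8\lambda m^2+12km$ available spanning trees, though the arithmetic is comfortable.
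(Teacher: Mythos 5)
Your proof is correct and takes essentially the same route as the paper: reserve, via Lemma~\ref{con+smalldeg}, two small-degree spanning $\lambda$-edge-connected subgraphs (one destined for each side) plus a small-degree $(3k-2)$-edge-connected subgraph, and then reduce to Lemma~\ref{lemma-1/2} applied to an even-$A$-degree auxiliary graph containing that subgraph, with the prescription shifted to $p(v)-d(v,R_1)$. The only difference is bookkeeping at the vertices of $A$: the paper pre-colours edges to the ratio $(m-1):1$ so the remainder has $A$-degrees divisible by $m$ and then extracts $1/m$ of it, whereas you pad $R_1$ so that $d(v,R_1)\ge d(v,R_2)$ on $A$ and hit the exact target degree $2t(v)$ directly -- both are valid and fit within the stated number of spanning trees.
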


\begin{proof}
Let $H_1$ and $H_2$ each be the union of $4\lambda m^2$ of the spanning trees, and let $H_3$ be the union of the remaining edges. By Lemma~\ref{con+smalldeg}, we can find a spanning $\lambda$-edge-connected subgraph $H_i'$ of $H_i$ satisfying 
\begin{align}\label{eq-Hi'}
d(v,H_i') < \frac{1}{m^2}d(v,H_i) < \frac{1}{m^2}d(v,G) 
\end{align}
for $i\in\{1,2\}$, and a spanning $3k$-edge-connected subgraph $H_3'$ of $H_3$ satisfying 
\begin{align}\label{eq-H3'}
d(v,H_3') < \frac{1}{m}d(v,H_3) < \frac{1}{m}d(v,G)\,.
\end{align} 
We are going to colour the edges of $G$ with colours 1 and 2 so that for $i\in\{1,2\}$ the graph $G_i$ induced by the edges coloured $i$ will be as required. As before, we denote the degree of a vertex $v$ in colour $i$ by $d_i(v)$.

We start by colouring all edges in $H_1'$ with colour 1, and all edges in $H_2'$ with colour 2. This ensures that both $G_1$ and $G_2$ will be $\lambda$-edge-connected. We also want 
\begin{align}\label{eq-m-1}
(m-1)d_1(v)=d_2(v)
\end{align}
to hold for $v\in A$. For every vertex in $A$, we colour more of its edges with colours 1 or 2 so that~\eqref{eq-m-1} is satisfied. We do it in such a way that the number of edges we colour is minimal. Because of~\eqref{eq-Hi'}, we have coloured at most $\frac{1}{m}d(v,G)$ edges incident with $v$ for every $v\in A$. Because of~\eqref{eq-H3'}, we can assume that all these coloured edges are outside of $H_3'$. Let $G'$ be the graph consisting of all edges we have coloured so far, and let $G''$ be the graph induced by the remaining edges. In particular, $G'$ satisfies~\eqref{eq-m-1} and $G''$ contains $H_3'$. In $G'$ every vertex in $A$ has degree divisible by $m$, so this must also be the case in $G''$. Since $d(v,G')\leq \frac{1}{m}d(v,G)$ for $v\in A$, we have $d(v,G'')\geq \frac{m-1}{m}d(v,G) \geq \frac{1}{2}d(v,G)$ and thus also
\begin{align*}
d(v,H_3') < \frac{1}{m}d(v,G) \leq \frac{2}{m}d(v,G'')
\end{align*}
for every $v\in A$. Now we repeat the argument from the proof of Proposition~\ref{prop-1/m}: We find a subgraph $G'''$ of $G''$ containing $H_3'$ and satisfying $d(v,G''') = \frac{2}{m}d(v,G'')$ for $v\in A$. Let $p':B\to \Z /k\Z$ be the function defined by $p'(v)=p(v)-d_1(v,G')$ for $v\in B$. By Lemma~\ref{lemma-1/2}, we can find a subgraph $H$ of $G'''$ satisfying
\begin{align*}
d(v,H) = \frac{1}{2}d(v,G''') = \frac{1}{m}d(v,G'')
\end{align*}
for $v\in A$, and $d(v,H) \equiv p'(v)$ modulo $k$ for $v\in B$. Colouring the edges of $H$ with colour 1 and the remaining edges of $G''$ with colour 2 yields a decomposition as desired.
\end{proof}

Repeated application of Lemma~\ref{lemma-1/m+lambda} results in the following proposition.

\begin{proposition}\label{prop-technical}
For all natural numbers $k, m,$ and $\lambda$, there exists a natural number $f_k(m,\lambda)$ such that the following holds:

If $G$ is a $f_k(m,\lambda)$-edge-connected bipartite graph on vertex classes $A$ and $B$, in which all vertices in $A$ have degree divisible by $m$, and $p_1,\ldots ,p_{m-1}$ are functions $p_i:B\to \Z /k\Z$ satisfying 
$$\sum_{v\in B} p_i(v) \equiv \frac{e(G)}{m} \hspace{2mm}(\mbox{mod } k)$$ 
for $i\in\{1,\ldots ,m-1\}$, then there is a decomposition of $G$ into $m$ spanning $\lambda$-edge-connected graphs $G_1,\ldots, G_m$ such that

$d(v,G_i)=\frac{1}{m}d(v,G)$ for $v\in A$ and $i\in\{1,\ldots ,m\}$, and
 
$d(v,G_i)\equiv p_i(v) \hspace{2mm}(\mbox{mod } k)$ for $v\in B$ and $i\in\{1,\ldots ,m-1\}$.
\end{proposition}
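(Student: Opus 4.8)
The plan is to prove Proposition~\ref{prop-technical} by induction on $m$, peeling off one graph $G_1$ at a time with Lemma~\ref{lemma-1/m+lambda} and applying the inductive hypothesis to what remains. For the base case $m=1$ there are no functions $p_i$ and no splitting to do, so I would set $f_k(1,\lambda)=\lambda$ and take $G_1=G$. For $m\geq 2$, assuming the statement for $m-1$, I would define
\[
f_k(m,\lambda)=2\bigl(8\,f_k(m-1,\lambda)\,m^2+12km\bigr),
\]
and write $\lambda'=f_k(m-1,\lambda)$; note that $\lambda'\geq\lambda$ since $f_k(j,\lambda)\geq\lambda$ for all $j\geq 1$ by the recursion.

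Now let $G$ be $f_k(m,\lambda)$-edge-connected. By Theorem~\ref{nash-williams}, $G$ has at least $8\lambda' m^2+12km$ edge-disjoint spanning trees, and the hypothesis gives $\sum_{v\in B}p_1(v)\equiv e(G)/m\pmod k$, so Lemma~\ref{lemma-1/m+lambda} (with its parameter ``$\lambda$'' set to $\lambda'$) produces a decomposition $G=G_1\cup G'$ into $\lambda'$-edge-connected subgraphs with $d(v,G_1)=\tfrac1m d(v,G)$ for $v\in A$ and $d(v,G_1)\equiv p_1(v)\pmod k$ for $v\in B$. In particular $G_1$ is $\lambda$-edge-connected, as required.

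The crux is the bookkeeping showing that $G'$ satisfies the hypotheses of the proposition for $m-1$. For $v\in A$ we have $d(v,G')=\tfrac{m-1}{m}d(v,G)$, which is divisible by $m-1$ because $m\mid d(v,G)$; summing over $A$ gives $e(G')=\tfrac{m-1}{m}e(G)$, hence $e(G')/(m-1)=e(G)/m$ and therefore $\sum_{v\in B}p_i(v)\equiv e(G')/(m-1)\pmod k$ for every $i$. Since $G'$ is $\lambda'=f_k(m-1,\lambda)$-edge-connected, the inductive hypothesis applied with the $m-2$ functions $p_2,\ldots,p_{m-1}$ decomposes $G'$ into spanning $\lambda$-edge-connected graphs $G_2,\ldots,G_m$ with $d(v,G_i)=\tfrac1{m-1}d(v,G')=\tfrac1m d(v,G)$ for $v\in A$, and $d(v,G_i)\equiv p_i(v)\pmod k$ for $v\in B$ and $i\in\{2,\ldots,m-1\}$. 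Together with $G_1$, this is the desired decomposition.

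I do not expect a genuine obstacle here --- the proposition is essentially just an iteration of Lemma~\ref{lemma-1/m+lambda} --- but the point requiring care is the definition of $f_k$: each peeling step must leave behind a graph with enough edge-disjoint spanning trees for the next step, which is exactly what forces the recursive definition above and makes $f_k(m,\lambda)$ grow very fast in $m$ (roughly like $16^m(m!)^2\lambda$). One should also keep track that the parameter fed to Lemma~\ref{lemma-1/m+lambda} ranges over $m,m-1,\ldots,2$ through the recursion, so its hypothesis $m\geq 2$ is never violated, and that the identity $e(G')/(m-1)=e(G)/m$ is what makes the uniform congruence condition on the $p_i$ propagate unchanged down the induction.
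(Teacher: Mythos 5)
Your proof is correct and follows essentially the same route as the paper: induction on $m$, peeling off one part with Lemma~\ref{lemma-1/m+lambda} (converting edge-connectivity to spanning trees via Theorem~\ref{nash-williams}), with the same recursion for $f_k$ --- your $2(8f_k(m-1,\lambda)m^2+12km)$ is exactly the paper's $16f_k(m-1,\lambda)m^2+24km$. The only cosmetic differences are that the paper starts the induction at $m=2$ directly from the lemma and peels off the part associated with $p_{m-1}$ rather than $p_1$; your explicit check that $e(G')/(m-1)=e(G)/m$, which the paper leaves implicit, is the right bookkeeping.
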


\begin{proof} 
We use induction on $m$. By Theorem~\ref{nash-williams} and Lemma~\ref{lemma-1/m+lambda}, the statement is true for $m=2$ and $f_k(2,\lambda) = 64\lambda + 48k$. Thus, we may assume $m\geq 3$ and $f_k(m-1,\lambda)$ exists. Set
\begin{align*}
f_k(m,\lambda) = 16f_k(m-1,\lambda)m^2 + 24km\,.
\end{align*} 
If $G$ is $f_k(m,\lambda)$-edge-connected, then we can use Lemma~\ref{lemma-1/m+lambda} to decompose $G$ into $f_k(m-1,\lambda)$-edge-connected subgraphs $G'$ and $G_{m-1}$ such that $d(v,G_{m-1})=d(v,G)/m$ for $v$ in $A$ and $d(v,G_{m-1})\equiv p_{m-1}(v)$ modulo $k$ for $v$ in $B$. Now we can use the induction hypothesis for $m-1$ with functions $p_1,\ldots ,p_{m-2}$ to decompose $G'$ into $m-1$ spanning $\lambda$-edge-connected subgraphs $G_1,\ldots ,G_{m-2}, G_m$ satisfying the conditions above. These graphs together with $G_{m-1}$ decompose $G$ as desired.
\end{proof}

Now Theorem~\ref{thm-technical} follows easily.

\begin{proof}[Proof of Theorem~\ref{thm-technical}]

For a partition $P$ of $m$ into parts $m_1,\ldots ,m_{b+1}$, we define $\pi (P)$ to be the product of $m_1,\ldots ,m_{b+1}$. We are going to show that every $f_{\pi (P)}(m,\lambda)$-edge-connected graph has a decomposition satisfying the conditions in the conclusion of Theorem~\ref{thm-technical}, where $f_{\pi (P)}$ is the function defined by Proposition~\ref{prop-technical}. Since there are only finitely many partitions of $m$ into positive integers, we can then choose $f(m,\lambda)$ as the maximum of all values $f_{\pi (P)}(m,\lambda)$ over all partitions $P$ of $m$.

Let $m=m_1 + \ldots + m_{b+1}$ be a partition of $m$ into positive integers, and let $k$ be the product of $m_1,\ldots ,m_{b+1}$. Let $G$ be $f_k(m,\lambda)$-edge-connected. We pick some function $q:B\to \Z /k\Z$ satisfying
$$\sum_{v\in B} q(v) \equiv \frac{e(G)}{m} \hspace{2mm}(\mbox{mod } k)\,,$$
and we apply Proposition~\ref{prop-technical} with $p_1=\ldots =p_{m-1}=q$ to get $\lambda$-edge-connected graphs $H_1,\ldots ,H_m$ satisfying
$d(v,H_i)=\frac{1}{m}d(v,G)$ for $v\in A$, $i\in\{1,\ldots ,m\}$, and
$d(v,H_i)\equiv q(v) \hspace{2mm}(\mbox{mod } k)$ for $v\in B$, $i\in\{1,\ldots ,m-1\}$. We construct graphs $G_1,\ldots ,G_{b+1}$ such that $G_i$ is the union of precisely $m_i$ of the graphs $H_j$, every $H_j$ is contained in precisely one of the $G_i$, and $G_{b+1}$ contains $H_m$. 
Now we have
$$d(v,G_i)=\frac{m_i}{m}d(v,G)$$
for $v\in A$, $i\in\{1,\ldots ,b+1\}$, and 
$$d(v,G_i)\equiv m_iq(v) \hspace{2mm}(\mbox{mod } k)$$
for $v\in B$, $i\in\{1,\ldots ,b\}$. Since $m_i$ divides $k$ for $i\in\{1,\ldots ,b\}$, we have that $d(v,G_i)$ is divisible by $m_i$ for $v\in B$, $i\in\{1,\ldots ,b\}$, so the graphs $G_1,\ldots ,G_{b+1}$ are as desired.
\end{proof}

\section{Trees of diameter 3}
Let $T$ be a tree of diameter 3. As before, it is sufficient to consider the case where the simple graph $G$ we want to decompose is bipartite. In particular, we can assume that the girth of $G$ is at least 4. Thus, every $T^*$-decomposition of $G$ is also a $T$-decomposition of $G$, and so Theorem~\ref{thm-treedecomp} immediately implies Conjecture~\ref{conj-treedecomp} in this case. In the following we are going to take a closer look at the value $k(T)$ resulting from the proof of Theorem~\ref{thm-treedecomp}.

Let $S(k,l)$ denote the bistar with two adjacent vertices of degree $k$ and $l$ respectively, and all other vertices having degree 1. Every tree of diameter 3 is isomorphic to a bistar $S(k,l)$ for some natural numbers $k$ and $l$ with $1<k\leq l$. 

The following proposition is a special case of Theorem~\ref{thm-technical}. We give a direct proof here to get a better edge-connectivity. For this we use the following strengthening of Theorem~\ref{ellingham}, which is Corollary 21 in~\cite{ellingham}: 
For every $\varepsilon$ with $0<\varepsilon < 1$, if $G$ is $\lceil \frac{4}{\varepsilon} \rceil$-edge-connected, then $G$ has a spanning tree $T$ such that $d(v,T) < \varepsilon \,d(v,G)$ for every $v \in V(G)$. This results in a canonical strengthening of Lemma~\ref{con+smalldeg}, which we shall use in the following proof.

\begin{proposition}\label{prop-bistar}
Let $k$, $l$ be natural numbers with $1<k\leq l$, and let $m=k+l-1$. Assume $G$ is a bipartite graph on vertex classes $A$ and $B$ where all vertices in $A$ have degree divisible by $m$. If $G$ is $3l\lceil \frac{2m}{k-1}\rceil$-edge-connected, then $G$ has a decomposition into two graphs $G_1$ and $G_2$ such that
\begin{itemize}
\item
$d(v,G_1)=\frac{k-1}{m}d(v,G)$ for $v\in A$, and
\item
$d(v,G_2)$ is divisible by $l$ for $v\in B$.
\end{itemize}
\end{proposition}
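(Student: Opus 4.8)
The plan is to carry out, in a single pass, the argument behind Proposition~\ref{prop-1/m} and Lemma~\ref{lemma-1/m+lambda}, with the numerology tuned to a bistar. For $v\in A$ write $d(v,G)=m\,c_v$ with $c_v$ a nonnegative integer. It is enough to produce a subgraph $G_1\subseteq G$ with $d(v,G_1)=\tfrac{k-1}{m}\,d(v,G)$ for every $v\in A$ and $d(v,G_1)\equiv d(v,G)\pmod{l}$ for every $v\in B$: then $G_2:=G-E(G_1)$ has $d(v,G_2)\equiv 0\pmod{l}$ on $B$, while $d(v,G_2)=\tfrac{l}{m}d(v,G)=l\,c_v$ on $A$, so both conclusions hold (the latter divisibility being automatic and not even required).

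To find $G_1$ I would reduce to the halving lemma, Lemma~\ref{lemma-1/2}. Put $\varepsilon=\tfrac{2(k-1)}{m}$; since $1<k\le l$ we have $0<2(k-1)<k+l-1=m$, hence $0<\varepsilon<1$ and $\lceil 4/\varepsilon\rceil=\lceil 2m/(k-1)\rceil$. Using the strengthened form of Lemma~\ref{con+smalldeg} (obtained from Corollary~21 of~\cite{ellingham}) together with Theorem~\ref{nash-williams}, the edge-connectivity hypothesis on $G$ yields a spanning $3l$-edge-connected subgraph $G'\subseteq G$ with $d(v,G')<\tfrac{2(k-1)}{m}d(v,G)$ for every $v$. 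Next, enlarge $G'$ by adding edges of $E(G)\setminus E(G')$, choosing them independently around each vertex of $A$ (no edge has both ends in $A$), until the resulting spanning subgraph $G''$ satisfies $d(v,G'')=\tfrac{2(k-1)}{m}d(v,G)=2(k-1)c_v$ for every $v\in A$. This is possible because $2(k-1)c_v$ is a nonnegative even integer, it exceeds $d(v,G')$, and it is at most $d(v,G)$ as $2(k-1)\le m$ (equivalently $k-1\le l$). Now $G''$ is bipartite on $A,B$, is $3l$-edge-connected, and every vertex of $A$ has even degree in it.

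Finally I would apply Lemma~\ref{lemma-1/2} to $G''$ with $k$ replaced by $l$ and with $p\colon B\to\Z/l\Z$ defined by $p(v)\equiv d(v,G)\pmod{l}$; its congruence hypothesis is automatic here, since with $C:=\sum_{v\in A}c_v$ one has $\tfrac12 e(G'')=(k-1)C$ and $\sum_{v\in B}p(v)\equiv e(G)=mC$, whose difference $mC-(k-1)C=lC$ is divisible by $l$. The lemma then gives a subgraph $G_1\subseteq G''$ with $d(v,G_1)=\tfrac12 d(v,G'')=\tfrac{k-1}{m}d(v,G)$ for $v\in A$ and $d(v,G_1)\equiv p(v)\equiv d(v,G)\pmod{l}$ for $v\in B$, and setting $G_2:=G-E(G_1)$ completes the argument as in the first paragraph. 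I expect the only genuinely delicate step to be the extraction of the core $G'$: one must pull out a $3l$-edge-connected spanning subgraph whose degrees already lie below the threshold $\tfrac{2(k-1)}{m}d(v,G)$, and it is the interplay of the improved spanning-tree bound, Theorem~\ref{nash-williams}, and the choice $\varepsilon=\tfrac{2(k-1)}{m}$ that controls how much edge-connectivity of $G$ this costs; everything after that is the routine bookkeeping above.
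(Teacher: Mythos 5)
Your argument is correct and is essentially the paper's own proof: extract a spanning $3l$-edge-connected subgraph with degrees below $\tfrac{2(k-1)}{m}d(v,G)$ via the strengthened form of Lemma~\ref{con+smalldeg}, enlarge it to a graph $G''$ whose $A$-degrees equal $\tfrac{2(k-1)}{m}d(v,G)$, and apply Lemma~\ref{lemma-1/2} with modulus $l$ and $p(v)\equiv d(v,G)\pmod{l}$, taking $G_2=G-E(G_1)$. The only differences are cosmetic: you verify explicitly the congruence hypothesis of Lemma~\ref{lemma-1/2} and the parity and degree bookkeeping that the paper leaves implicit.
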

\begin{proof}
The proof is essentially the same as the proof of Proposition~\ref{prop-1/m}. By the strengthening of Lemma~\ref{con+smalldeg}, we can find a spanning $3l$-edge-connected subgraph $G'$ with $d(v,G') < \frac{2(k-1)}{m}d(v,G)$. 
Since $2(k-1)<m$, we can add some edges of $G$ to $G'$ to get a graph $G''$ in which all vertices in $A$ have degree precisely $\frac{2(k-1)}{m}d(v,G)$. Now we use Lemma~\ref{lemma-1/2} with the function $p:B\to \Z /l\Z$ defined by $p(v)\equiv d(v,G)$ modulo $l$. The resulting subgraph $G_1$ of $G''$ satisfies $d(v,G_1) = \frac{1}{2}d(v,G'') = \frac{k-1}{m}d(v,G)$ for $v$ in $A$, and $d(v,G_1) \equiv p(v)$ modulo $l$ for $v$ in $B$. Let $G_2$ denote the graph $G-E(G_1)$, then $d(v,G_2)=d(v,G)-d(v,G_1)\equiv 0$ modulo $l$ for $v\in B$, so the graphs $G_1$ and $G_2$ are as desired.
\end{proof}

Given a decomposition of a graph $G$ into graphs $G_1$ and $G_2$ as above, we immediately get an $S(k,l)$-decomposition by the same arguments as in Section 3: We edge-colour $G_2$ with $l$ colours so that every vertex has the same degree in each colour, and we edge-colour $G_1$ with $k-1$ different colours so that every vertex in $A$ has the same degree in all $k+l-1$ colours. Now we get an $S(k,l)$-decomposition by Lemma~\ref{equivtreedecomp}, where the vertices of degree $k$ lie in $A$ and the vertices of degree $l$ lie in $B$.

It was proved in~\cite{bistars} that every $180k^4$-edge-connected bipartite simple graph with size divisible by $2k$ has an $S(k,k+1)$-decomposition. Combining Proposition~\ref{prop-bistar} with Proposition~\ref{prop-degdivbym}, we get the following stronger result.

\begin{theorem}\label{thm-bistar-bipartite}
Let $k$, $l$ be natural numbers with $1<k\leq l$, and let $m=k+l-1$. Every $(12l\lceil \frac{2m}{k-1}\rceil+6m-4)$-edge-connected bipartite graph with size divisible by $m$ has an $S(k,l)$-decomposition.

In particular, every $(72k+236)$-edge-connected bipartite simple graph with size divisible by $2k$ has an $S(k,k+1)$-decomposition.
\end{theorem}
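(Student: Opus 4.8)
The plan is to combine Proposition~\ref{prop-bistar} with the bipartite reduction of Proposition~\ref{prop-degdivbym}, following the remark immediately after Proposition~\ref{prop-bistar}, and to track how the various edge-connectivity thresholds combine. Two preliminary observations: since $1<k\le l$ we have $2(k-1)<k+l-1=m$, so Proposition~\ref{prop-bistar} applies; and a bipartite simple graph has girth at least $4$, which exceeds the diameter $3$ of $S(k,l)$, so by the remark in the introduction it suffices to construct an $S(k,l)^*$-decomposition.

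First I would reduce to the situation in which one side of the bipartition has all degrees divisible by $m$. Put $l_0:=3l\lceil 2m/(k-1)\rceil$. By Theorem~\ref{nash-williams} a $(6m-4+4l_0)$-edge-connected graph has $3m-2+2l_0$ edge-disjoint spanning trees, so Proposition~\ref{prop-degdivbym} (with its parameter ``$l$'' taken to be $l_0$) splits $G$ into two edge-disjoint spanning $l_0$-edge-connected bipartite graphs $G'$ and $G''$, in the first of which all vertices of one colour class have degree divisible by $m$, and in the second of which all vertices of the other colour class do. Since $6m-4+4l_0=12l\lceil 2m/(k-1)\rceil+6m-4$ is exactly the hypothesised edge-connectivity, it now suffices to give an $S(k,l)$-decomposition of $G'$ and of $G''$; their union decomposes $G$.

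By symmetry, consider $G'$ on classes $A,B$, with all $A$-degrees divisible by $m$ and edge-connectivity $l_0=3l\lceil 2m/(k-1)\rceil$. Applying Proposition~\ref{prop-bistar} gives $G'=G_1\cup G_2$ with $d(v,G_1)=\frac{k-1}{m}d(v,G')$ for $v\in A$ and $d(v,G_2)$ divisible by $l$ for $v\in B$. Here $d(v,G_2)=\frac{l}{m}d(v,G')$ for $v\in A$ is also divisible by $l$ (as $m\mid d(v,G')$), so all degrees of $G_2$ are divisible by $l$, and all $A$-degrees of $G_1$ are divisible by $k-1$. Splitting vertices to make $G_2$ $l$-regular and to make $G_1$ have maximum degree $k-1$, K\"onig's edge-colouring theorem gives a proper $l$-colouring of (the split of) $G_2$ and a proper colouring of (the split of) $G_1$ with $k-1$ new colours; pulling back, each $v\in A$ gets exactly $\frac1m d(v,G')$ edges of each of the $m$ colours, while each $v\in B$ gets the same number of edges of each of the $l$ colours coming from $G_2$. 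Colouring $T=S(k,l)$ with its degree-$k$ centre in $T_A$ (colour set $T(1)$) and its degree-$l$ centre in $T_B$ (colour set $T(a+1)$, taken to be the $l$ colours used on $G_2$, which overlaps $T(1)$ in one colour), this makes the colouring of $G'$ $T$-equitable, so Lemma~\ref{equivtreedecomp} produces an $S(k,l)^*$-decomposition of $G'$, hence an $S(k,l)$-decomposition by the girth remark.

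For the last statement, set $l=k+1$ and $m=2k$, so the bound is $12(k+1)\lceil 4k/(k-1)\rceil+12k-4$: for $k\ge 5$ we have $4<\frac{4k}{k-1}\le 5$, giving $\lceil 4k/(k-1)\rceil=5$ and the value $72k+56\le 72k+236$, while for $k\in\{2,3,4\}$ one has $\lceil 4k/(k-1)\rceil\le 8$ and hence a value at most $108k+92\le 72k+236$. The substantive work all sits inside Propositions~\ref{prop-degdivbym} and~\ref{prop-bistar}; the main points to be careful about are the observation that $G_2$ has \emph{every} degree divisible by $l$ (not just those in $B$), the palette bookkeeping needed to certify that the final $m$-colouring is $T$-equitable, and the arithmetic tying together the thresholds of Theorem~\ref{nash-williams}, Proposition~\ref{prop-degdivbym}, and Proposition~\ref{prop-bistar} — none of which I expect to present a real obstacle.
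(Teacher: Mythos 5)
Your proposal is correct and follows essentially the same route as the paper: Nash--Williams plus Proposition~\ref{prop-degdivbym} (with parameter $l_0=3l\lceil 2m/(k-1)\rceil$, which accounts exactly for the stated bound $4l_0+6m-4$) to reduce to the two halves with one side's degrees divisible by $m$, then Proposition~\ref{prop-bistar} and the K\"onig/equitable-colouring argument of Lemma~\ref{equivtreedecomp} on each half, with the girth-$4$ remark turning the $S(k,l)^*$-decomposition into an $S(k,l)$-decomposition. Your verification that the general bound is at most $72k+236$ for $l=k+1$, $m=2k$ is also correct.
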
 

For trees of diameter 3 the general reduction to the bipartite case from Proposition~\ref{bipartite} was done more efficiently in~\cite{bistars}.

\begin{theorem}\emph{\cite{bistars}}\label{bipartite-diam3}
Let $T$ be a tree on $m$ edges with diameter 3, and let $k$ be a natural number. If $G$ is a $(4k + 16m(m+1))$-edge-connected graph, then $G$ can be decomposed into a $k$-edge-connected bipartite graph~$G'$ and a graph $H$ admitting a $T$-decomposition.
\end{theorem}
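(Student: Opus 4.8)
The goal is to find a subgraph $H\subseteq G$ that has a $T$-decomposition and for which $G':=G-E(H)$ is bipartite and $k$-edge-connected; then $G=G'\cup H$ is the required decomposition. Write $T=S(s,t)$ with $1<s\le t$ and $m=s+t-1$, and recall that a copy of $S(s,t)$ is an edge $c_1c_2$ together with $s-1$ further edges at $c_1$ and $t-1$ further edges at $c_2$. The plan is the standard reduction strategy, in a more economical diameter-$3$ form (compare the general bound of Theorem~\ref{bipartite}): fix a vertex $2$-colouring of $G$ — a maximum cut $(A,B)$ is the natural choice, since it guarantees that every vertex $v$ has at most $d(v,G)/2$ monochromatic edges, equivalently at least $d(v,G)/2\ge 2k+8m(m+1)$ bichromatic edges — let $H$ absorb all monochromatic edges (those inside $A$ or inside $B$) by extending them into edge-disjoint copies of $T$, and let $G'$ be everything else.

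The first task is to secure the connectivity of $G'$. Since $G'\subseteq G[A,B]$, it suffices to protect inside $G[A,B]$ a $k$-edge-connected spanning backbone. The bichromatic graph $G[A,B]$ has large minimum degree, but that alone does not give high edge-connectivity, so here one spends the surplus connectivity of $G$: by Theorem~\ref{nash-williams} and Lemma~\ref{con+smalldeg} one peels off a highly edge-connected spanning subgraph of small degree and, if need be, reroutes or re-colours inside it so that a $k$-edge-connected bipartite backbone is available while the remaining bichromatic edges still furnish a $\Theta(m^2)$-worth of slack.

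The second task is to cover the monochromatic edges. For a monochromatic edge $xy$, say with $x,y\in A$, one places it in a copy of $S(s,t)$, completing the copy with its remaining pendant edges routed so as to consume bichromatic edges sparingly — for instance by making $xy$ a pendant edge of a copy whose centre lies in $B$ rather than its central edge. Doing this for all monochromatic edges at once amounts to choosing the copies pairwise edge-disjoint and disjoint from the protected backbone, and the delicate point is that at every vertex the bichromatic edges these copies consume, together with the backbone, must stay within that vertex's bichromatic supply of $\ge d(v,G)/2$; this is exactly where the surplus connectivity and a judicious distribution of the pendant edges are needed. As in the proof of Lemma~\ref{lemma-1/2}, Theorem~\ref{mader} and Theorem~\ref{weak-k-flow} would be invoked to arrange the divisibility conditions that make the copies fit together exactly. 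Their union is $H$: it is $T$-decomposable by construction, and $G-E(H)$ has no monochromatic edge and contains the backbone, so it is bipartite and $k$-edge-connected.

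The main obstacle is the simultaneous accounting in these two tasks: the bichromatic edges at a vertex must both feed the copies of $T$ that absorb its monochromatic edges and keep $G'$ highly edge-connected, yet a maximum cut only guarantees that a $\tfrac12$-fraction of each vertex's edges is bichromatic — precisely the borderline amount. Obtaining the bound $4k+16m(m+1)$ is what forces the $\Theta(m^2)$ of extra edge-connectivity: roughly $\Theta(m)$ spanning trees are consumed per layer of the construction over $\Theta(m)$ layers, and one must choose — or iteratively refine — the cut and the backbone so that no vertex is overloaded on both accounts at once.
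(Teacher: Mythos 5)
There is a genuine gap here — in fact this statement is only quoted by the paper from~\cite{bistars} (no proof is given in the paper), and your text is an outline rather than a proof: the two steps that carry all the difficulty are precisely the ones you leave open. First, the backbone. You correctly observe that the per-vertex consequence of a maximum cut (each vertex has at least half of its degree in bichromatic edges) does not give edge-connectivity of $G[A,B]$, but your proposed fix --- peel off spanning trees of $G$ via Theorem~\ref{nash-williams} and Lemma~\ref{con+smalldeg} and then ``reroute or re-colour'' --- does not work as stated: spanning trees of $G$ are not bipartite with respect to the chosen cut, and no rerouting procedure is described, so the existence of a $k$-edge-connected bipartite backbone is simply asserted. (The clean argument you are missing: for a maximum cut $(A,B)$, replacing $A$ by $A\triangle S$ toggles exactly the edges across $(S,V(G)\setminus S)$, so maximality forces at least half of the $G$-edges across \emph{every} vertex cut to be bichromatic; hence $G[A,B]$ is $(2k+8m(m+1))$-edge-connected, and the small-degree connectivity lemmas can then be applied to $G[A,B]$ itself.)

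Second, the absorption of monochromatic edges is the heart of the theorem and is not carried out. You need the monochromatic edges to be partitioned into edge-disjoint copies of $S(s,t)$ completed by bichromatic edges so that (a) the copies fit together exactly --- your one-sentence appeal to Theorem~\ref{mader} and Theorem~\ref{weak-k-flow} does not say what orientation problem is being solved, on which auxiliary graph, or why its edge-connectivity and congruence hypotheses hold --- and (b) the bichromatic edges consumed leave at least $k$ edges in every cut of $G[A,B]$, not merely enough edges at every vertex; per-vertex budgeting alone does not control what is removed from an arbitrary cut, which is why one must reserve a small-degree $k$-edge-connected backbone inside $G[A,B]$ and complete copies only with unreserved edges. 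You explicitly flag this accounting as ``the delicate point'' and ``the main obstacle'' and resolve it only by saying the cut and backbone should be ``iteratively refined'' so that no vertex is overloaded --- but that is a restatement of the theorem, not an argument, and it is also where the specific bound $4k+16m(m+1)$ would have to be extracted, which your sketch never does.
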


It was proved in~\cite{bistars} that every $784k^4$-edge-connected simple graph with size divisible by $2k$ has a $S(k,k+1)$-decomposition. Combining Theorem~\ref{thm-bistar-bipartite} with Theorem~\ref{bipartite-diam3}, we get the following more general result.

\begin{theorem} 
Let $k$, $l$ be natural numbers with $1<k\leq l$, and let $m=k+l-1$. Every $112m^2$-edge-connected graph of size divisible by $m$ has an $S(k,l)$-decomposition.
\end{theorem}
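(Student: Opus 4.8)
The plan is to chain together the bipartite reduction for diameter-$3$ trees (Theorem~\ref{bipartite-diam3}) with the bipartite result of Theorem~\ref{thm-bistar-bipartite}, exactly as the paragraph preceding the statement indicates. Since $1<k\le l$, the bistar $T=S(k,l)$ is a tree of diameter $3$ with $m=k+l-1$ edges, so both cited theorems apply to it, and the graph $G$ to be decomposed may be taken to be simple (as in the rest of Section~5).

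First I would set $k_0:=12l\lceil\frac{2m}{k-1}\rceil+6m-4$, the edge-connectivity threshold appearing in Theorem~\ref{thm-bistar-bipartite}, and apply Theorem~\ref{bipartite-diam3} with this value of its parameter $k$. This requires $G$ to be $(4k_0+16m(m+1))$-edge-connected; assuming for the moment that $112m^2\ge 4k_0+16m(m+1)$, we obtain a decomposition of $G$ into a $k_0$-edge-connected bipartite graph $G'$ and a graph $H$ with an $S(k,l)$-decomposition. As $H$ is $S(k,l)$-decomposable, $e(H)$ is a multiple of $m$, and since $e(G)$ is too, so is $e(G')=e(G)-e(H)$; moreover $G'\subseteq G$ is simple. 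Hence $G'$ is a $k_0$-edge-connected bipartite graph of size divisible by $m$, so Theorem~\ref{thm-bistar-bipartite} furnishes an $S(k,l)$-decomposition of $G'$. Combining the decompositions of $G'$ and of $H$ gives the desired $S(k,l)$-decomposition of $G$.

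It remains to justify the inequality $112m^2\ge 4k_0+16m(m+1)$, which is the only computational point. Expanding, $4k_0+16m(m+1)=48l\lceil\frac{2m}{k-1}\rceil+16m^2+40m-16$, so it suffices to bound $l\lceil\frac{2m}{k-1}\rceil$ by roughly $2m^2$. Using $\lceil\frac{2m}{k-1}\rceil\le\frac{2m}{k-1}+1$ together with $l=m-k+1$ and the substitution $j=k-1\ge 1$, one finds $l\lceil\frac{2m}{k-1}\rceil\le\frac{(m-j)(2m+j)}{j}=\frac{2m^2}{j}-m-j$, an expression decreasing in $j$ and therefore at most its value $2m^2-m-1$ at $j=1$. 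Substituting, $4k_0+16m(m+1)\le 48(2m^2-m-1)+16m^2+40m-16=112m^2-8m-64<112m^2$, as needed. I expect this small optimization — recognizing that the extremal case is $k=2$, $l=m-1$ — to be the only subtle step; everything else is a routine concatenation of the two cited theorems together with the divisibility bookkeeping for $e(G')$.
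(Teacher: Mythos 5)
Your proposal is correct and follows exactly the route the paper intends (and leaves implicit): apply Theorem~\ref{bipartite-diam3} with the threshold of Theorem~\ref{thm-bistar-bipartite} as its parameter, handle the divisibility of $e(G')$, and check that $112m^2$ dominates $4\bigl(12l\lceil\tfrac{2m}{k-1}\rceil+6m-4\bigr)+16m(m+1)$, with the extremal case $k=2$. Your arithmetic verification (yielding $112m^2-8m-64<112m^2$) is sound, so nothing is missing.
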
 

For $k=l=2$, the bistar $S(k,l)$ is a path of length 3. This special case was investigated in~\cite{3paths}, where it was shown that every 171-edge-connected simple graph with size divisible by 3 admits a $P_4$-decomposition. In the proof it was shown that every 2-edge-connected bipartite simple graph where all vertices on one side have degree divisible by 3 admits a decomposition into paths of length 3. Note that for $m$ odd $3m-3+2l$ edge-disjoint spanning trees suffice in Proposition~\ref{prop-degdivbym}, so every bipartite simple graph with $10$ edge-disjoint spanning trees has a $P_4$-decomposition. Replacing this part in the proof of~\cite{3paths}, we get that every 63-edge-connected simple graph with size divisible by 3 can be decomposed into paths of length 3.

\section{Trees of diameter 4}

Let $T$ be a tree of diameter 4. We may assume that the graph $G$ we want to decompose is bipartite and thus has girth 4, so the only problem is that some homomorphic copies in the $T^*$-decomposition might contain 4-cycles. To take care of this, we shall start with a $T^*$-decomposition and try to improve it by switching leaf edges between different homomorphic copies. It is essential that Theorem~\ref{thm-technical} results in a decomposition where every vertex has a large degree in every colour, so that we have enough freedom to make switches. This method can be used whenever the girth of $G$ is at least the diameter of $T$. Before we see how this strategy works in a general setting, we investigate the path of length 4. Notice that the minimum degree condition in the next proposition cannot be omitted, since a cycle of length 4 satisfies all other conditions.

\begin{proposition}\label{prop-4path}
Let $G$ be a bipartite simple graph on vertex classes $A$ and $B$ with size divisible by 4, where the vertices in $A$ have even degree. 

If $G$ is 2-edge-connected, then $G$ has a $P_5^*$-decomposition. 

If $G$ is 2-edge-connected and the vertices in $A$ have minimum degree 4, then $G$ has a $P_5$-decomposition.
\end{proposition}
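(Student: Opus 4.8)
The plan is to first produce a $P_5^*$-decomposition and then, under the extra degree hypothesis, repair it into a genuine $P_5$-decomposition by local switches.

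\emph{Step 1: a $P_5^*$-decomposition.} Since every vertex of $A$ has even degree, I would pair up the edges at each vertex of $A$ and so partition $E(G)$ into $e(G)/2$ \emph{cherries} (paths of length $2$), each centred at a vertex of $A$ with both ends in $B$; this number is even because $4\mid e(G)$. Encode this as an auxiliary multigraph $\mathcal{D}$ on vertex set $B$ in which each cherry $b-a-b'$ contributes an edge $bb'$, so $e(\mathcal{D})=e(G)/2$. A partition of $E(\mathcal{D})$ into paths of length $2$ (a pair of parallel edges being allowed to count as such) pulls back to a partition of $E(G)$ into homomorphic copies of $P_5$: if the $\mathcal{D}$-edges of two cherries meet in a vertex, the cherries join along it into a walk $b_0-a_1-b_1-a_2-b_2$ with $a_1,a_2\in A$, which is a $P_5$ when $b_0\neq b_2$ and a $C_4$ when $b_0=b_2$. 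Such a partition of $E(\mathcal{D})$ exists once every component of $\mathcal{D}$ has an even number of edges, by the classical fact that a connected (multi)graph has a $P_3$-decomposition iff it has an even number of edges. As $e(\mathcal{D})$ is even, the components with odd edge-count come in pairs; and whenever $\mathcal{D}$ is disconnected some vertex of $A$ has neighbours in two distinct components of $\mathcal{D}$ (otherwise $G$ would be disconnected), and re-pairing its edges appropriately reduces the number of components. Iterating, I may assume $\mathcal{D}$ connected, which gives the first statement.

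\emph{Step 2: repairing to a $P_5$-decomposition.} Now assume in addition that every vertex of $A$ has degree at least $4$, and start from the decomposition of Step 1. This decomposition is \emph{aligned}: every part is of the form $b_0-a_1-b_1-a_2-b_2$ with $a_1,a_2\in A$ (a genuine $P_5$, or a $C_4$ if $b_0=b_2$), and a vertex $a\in A$ lies in any given part on exactly two of its edges, since it cannot be the image of two internal vertices of $P_5$ without two edges of the copy becoming parallel. Hence $a$ lies in exactly $d(a)/2\geq 2$ parts. Suppose some part $K$ is a $C_4$, with $A$-vertices $a_1,a_2$ and $B$-vertices $b_1,b_2$, so $K=a_1b_1a_2b_2$; pick another part $L\ni a_1$. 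If $L$ is a genuine $P_5$, then $a_1$ lies on exactly one leaf edge $a_1b_3$ of $L$, and $b_3\notin\{b_1,b_2\}$ because the decomposition is edge-disjoint and $G$ is simple; writing $b_5$ for the far end of $L$, exchanging $a_1b_3$ with $a_1b_1$ turns $K$ into the genuine $P_5$ $b_3-a_1-b_2-a_2-b_1$ and turns $L$ into a walk $b_1-a_1-\cdots-b_5$, which is a genuine $P_5$ unless $b_5=b_1$; in that case exchanging $a_1b_3$ with $a_1b_2$ instead works, since $b_5$ cannot equal both $b_1$ and $b_2$. Either way the number of $C_4$-parts drops by one. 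If instead $L$ is itself a $C_4$, with $A$-vertices $a_1,a_4$ and $B$-vertices $b_3,b_4$ (necessarily disjoint from $\{b_1,b_2\}$), then the eight edges of $K\cup L$ regroup into the two genuine $P_5$'s $b_1-a_2-b_2-a_1-b_3$ and $b_1-a_1-b_4-a_4-b_3$, and the number of $C_4$-parts drops by two. All of these operations keep the decomposition aligned, so iterating eliminates every $C_4$ and yields a $P_5$-decomposition.

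\emph{Where the difficulty lies.} The heart of the proof is the switching in Step 2, and this is exactly the place where the hypothesis $d(v,G)\geq 4$ for $v\in A$ is used: it forces each $C_4$-part to share a vertex $a_1$ with a second part, supplying a ``free'' leaf edge at $a_1$, and it is what makes the aligned structure rigid enough that the two candidate exchanges (when $L$ is a $P_5$) and the eight-edge regrouping (when $L$ is a $C_4$) are necessarily genuine copies of $P_5$ and never recreate a $C_4$ elsewhere; checking this small case analysis carefully is the main obstacle. The bookkeeping in Step 1 — reducing to the case where every component of $\mathcal{D}$ has an even number of edges — is routine, but one should verify with some care that re-pairing the edges at a vertex of $A$ does not increase the number of components of $\mathcal{D}$.
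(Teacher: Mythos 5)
Your Step 2 is fine, but Step 1 has a real gap at the point you dismiss as routine bookkeeping. You need a pairing of the edges at the $A$-vertices for which every component of the auxiliary multigraph $\mathcal{D}$ on $B$ has an even number of edges, and you propose to get this by making $\mathcal{D}$ connected via local re-pairings, claiming that whenever some $a\in A$ has neighbours in two components of $\mathcal{D}$, ``re-pairing its edges appropriately reduces the number of components.'' This is not established, and the obvious swap can fail to make progress: if the cherries $b_1\text{-}a\text{-}b_2$ and $b_3\text{-}a\text{-}b_4$ give $\mathcal{D}$-edges $b_1b_2$ and $b_3b_4$ that are cut-edges of their respective components $C_1\neq C_2$, then any re-pairing of these four edges at $a$ replaces them by two edges each joining one piece of $C_1-b_1b_2$ to one piece of $C_2-b_3b_4$, and $C_1\cup C_2$ still splits into two components (moreover the parities of their edge counts need not improve). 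So neither the component count nor the number of odd components is a decreasing potential for your operation, and termination of the iteration is unproven; ruling out getting stuck globally would need a genuine argument exploiting $2$-edge-connectivity of $G$. This is exactly the point where the paper invokes Mader's lifting theorem (Theorem~\ref{mader}): lifting the $A$-vertices while preserving $2$-edge-connectivity produces, in one stroke, a pairing whose auxiliary graph (your $\mathcal{D}$) is $2$-edge-connected, hence connected, after which the even total size gives the required path/orientation decomposition. Citing that theorem (or reproving a version of it) would close your gap; as written, Step 1 is incomplete.

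Your Step 2, by contrast, is correct and takes a genuinely different route from the paper. The paper fixes the middle (red) edges, takes a $T^*$-decomposition minimizing the number of conflicts, and derives a contradiction via a reachability/counting argument in an auxiliary digraph $D(x)$, using the minimum degree to bound outdegrees. You instead exploit the ``aligned'' structure directly: each $C_4$-part shares an $A$-vertex with some other part (here is where $d(v)\geq 4$ for $v\in A$ is used), and a short case analysis shows an explicit exchange of one or two leaf edges strictly decreases the number of $C_4$-parts without creating new ones; I checked the cases (including the fallback exchange when the far end of $L$ coincides with $b_1$, and the eight-edge regrouping when $L$ is itself a $C_4$) and they are sound, with edge-disjointness plus simplicity guaranteeing the required vertex distinctness. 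Your local-switching argument is more elementary and self-contained than the paper's global minimality argument; the paper's version has the advantage of generalizing (as in its Section 6) to arbitrary trees whose diameter equals the girth, where a single switch cannot always be made conflict-free and the reachability argument is what saves the day.
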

\begin{proof}
We can lift the vertices in $A$ such that the resulting graph $G'$ is still 2-edge-connected. Since $G'$ is connected and has an even number of edges, it is possible to orient its edges so that every vertex has an even outdegree. Every directed edge in $G'$ corresponds to a directed path of length 2 in $G$. We colour the first edge of each of these directed paths red and the second edge blue. Now every vertex in $A$ has the same degree in red and blue, and the vertices in $B$ have even degree in red.

We first match the red edges at every vertex in $B$ arbitrarily, these will be the two middle edges of the paths of length 4. For each red path of length 2, we need to add a blue edge to each of its ends. Since the vertices in $A$ have the same degree in red and blue, we can find a matching of the blue edges and the ends of the red paths resulting in a $P_5^*$-decomposition. This proves the first part of the proposition, so we may now assume that the vertices in $A$ have minimum degree $2d$ for some $d\geq 2$.

Let $x$ be a vertex in $B$. We say that a homomorphic copy of $P_5$ has a \textit{conflict} at $x$, if $x$ is incident to both blue edges of that copy. We choose a matching of the red and blue edges such that the number of conflicts, and thus the number of 4-cycles, is minimal. 

Suppose there is a conflict at some vertex $x$ in $B$. Consider the directed graph $D(x)$ where the vertices are the homomorphic copies of $P_5$ in our decomposition. For two homomorphic copies $T_1$ and $T_2$, we add an edge oriented from $T_1$ to $T_2$ in $D(x)$ for every $a\in A$ such that $ax$ is a blue edge of $T_1$, and there is a vertex $b\in B$ for which $ab$ is a blue edge of $T_2$. The idea is that it is then possible to switch the blue edge $ax$ of $T_1$ with the blue edge $ab$ of $T_2$ to decrease the number of occurences of $x$ in $T_1$. Notice that such a switch might create a new conflict at $x$, but not at any other vertex.

In $D(x)$, each vertex has either outdegree 0 (if $x$ is not a leaf in the homomorphic copy), or it has at least outdegree $d-1$. Notice that every vertex with positive outdegree has indegree at most 1, since the corresponding homomorphic copy has at most one blue edge not incident with $x$, say $ab$ with $a\in A$, $b\in B$, and there is at most one homomorphic copy in which $ax$ is a blue edge.

Since we assumed there is a conflict at $x$, there is a vertex $v$ in $D(x)$ with outdegree at least $2(d-1)$ and indegree 0. Let $X$ be the set of vertices we can reach from $v$ via a directed path, including $v$. Suppose every vertex in $X$ has positive outdegree, then the subgraph induced by $X$ contains at least $(|X|+1)(d-1)$ edges. However, it can contain at most $|X|-1$ edges, since every vertex has indegree at most 1, and $v$ has indegree 0. Thus, there is a directed path in $D(x)$ from $v$ to a vertex of outdegree 0, and making the switches corresponding to the edges on this path reduces the number of conflicts by 1, contradicting our assumption.
\end{proof}

It was shown in~\cite{4paths} that every $10^{10^{10^{14}}}$-edge-connected graph of size divisible by 4 has a decomposition into paths of length 4. Using the proposition above, this bound on the edge-connectivity can be significantly improved. By Proposition~\ref{prop-degdivbym}, every bipartite simple graph with 14 spanning trees and size divisible by 4 can be decomposed into two graphs satisfying the conditions of Proposition~\ref{prop-4path}. Combining this with the first part of the proof in~\cite{4paths} shows that every 107-edge-connected graph of size divisible by 4 has a $P_5$-decomposition.

Conjecture~\ref{conj-treedecomp} can be restated as follows:
For every tree $T$ on $m$ edges, and every natural number $g$ with $g\geq 3$, there exists a natural number $k(T,g)$ such that every $k(T,g)$-edge-connected simple graph with girth at least $g$ and size divisible by $m$ has a $T$-decomposition. The existence of $k(T,4)$ is equivalent to the existence of $k(T)$ by Theorem~\ref{bipartite}. 
If $d$ denotes the diameter of $T$, then we know by Theorem~\ref{thm-treedecomp} that $k(T,d+1)$ exists. The following theorem shows that also $k(T,d)$ exists. This implies Conjecture~\ref{conj-treedecomp} for trees of diameter 4.

\begin{theorem}
Let $T$ be a tree of size $m$ and diameter $d$. There exists a natural number $k(T,d)$ such that every $k(T,d)$-edge-connected simple graph $G$ with girth at least $d$ and size divisible by $m$ has a $T$-decomposition.
\end{theorem}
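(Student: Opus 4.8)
The plan is to mimic the strategy already used for $P_5$ in Proposition~\ref{prop-4path}, but carried out one ``level'' of the tree at a time, working from the leaves inward. First I would reduce to the bipartite case: by Theorem~\ref{bipartite} (applied with $k$ large) and Proposition~\ref{prop-degdivbym}, it suffices to prove the statement for a highly edge-connected bipartite graph $G$ on classes $A,B$ with all vertices of $A$ having degree divisible by $m$; the girth of such a $G$ is already at least $4$, so for $d=4$ the only defects in a $T^*$-decomposition are $4$-cycles arising from identifications of two vertices of $T$ at distance exactly $4$, i.e. identifications of two leaves that share a common neighbour of their common neighbour. (For $d\le 3$ there is nothing to prove, since Theorem~\ref{thm-treedecomp} already gives a genuine $T$-decomposition.) So the crux is to take the $T^*$-decomposition produced by Theorem~\ref{thm-T-equitable}, with the extra feature that the minimum degree in every colour is at least $d$ for some large constant $d=d(T)$ we get to choose, and to repair the copies that fail to be isomorphic to $T$.

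The key mechanism is a local switching argument generalising the one in Proposition~\ref{prop-4path}. Fix the colouring $1,\dots,m$ of the edges of $T$ and the induced $T$-equitable colouring of $G$; a homomorphic copy of $T$ in the decomposition is faulty precisely when, for some pair of vertices $x\ne y$ of $T$ with the same colour-neighbourhood pattern forcing identification, the corresponding two coloured edges of the copy land on the same vertex of $G$. I would measure ``badness'' by the total number of such coincidences over all copies and choose, among all $T$-equitable colourings realising the degree equation~\eqref{degequal}, one minimising this quantity. Then, arguing by contradiction, suppose some copy $T_0$ has a coincidence at a vertex $v$ of $G$ caused by two leaf-edges $e,e'$ of colours $i,j$. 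As in the $P_5$ proof, build an auxiliary directed graph whose vertices are the copies in the decomposition, with an arc from $T_1$ to $T_2$ whenever one can legally swap a leaf-edge of $T_1$ incident to $v$ with a leaf-edge of $T_2$ of the same colour incident to a different vertex of $G$, so that this swap strictly decreases the number of coincidences at $v$ and creates no new coincidence anywhere except possibly at $v$. The minimum-degree-$d$ hypothesis guarantees that every copy in which $v$ is a leaf has large out-degree ($\ge d-c$ for a constant $c$ depending only on $T$), while each copy has bounded in-degree (at most the number of leaves of $T$, a constant), because a copy has only boundedly many leaf-edges available to receive a swapped edge. A counting argument identical in spirit to the one in Proposition~\ref{prop-4path} — the reachable set from $T_0$ would carry more arcs than its size allows if every reachable copy had positive out-degree — produces a directed path to a copy of out-degree $0$, and performing the corresponding sequence of swaps strictly reduces badness, contradicting minimality. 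Iterating (or, more carefully, running this over the coincidences one vertex of $T$ at a time, outermost leaves first) eliminates all faults and yields a genuine $T$-decomposition.

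The main obstacle, and the place where this is genuinely harder than $P_5$, is bookkeeping the higher-diameter structure: for diameter $5$ or more a single leaf-swap may fix one coincidence but the vertex $v$ of $G$ might simultaneously host non-leaf vertices of $T_0$, and swapping an edge can propagate a defect further into the tree rather than just ``at $x$''. To control this I would set up the switching so that we only ever move edges lying on the pendant stars at the deepest level — edges incident to leaves of $T$ — and first make all such coincidences vanish, then repeat on the tree obtained by deleting the leaves, using that after the first round the remaining copies agree with $T$ on everything except possibly the pendant leaves; the $T$-equitability and the large minimum degree in each colour are exactly what let one peel off one level at a time without destroying the work done at previous levels. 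One must check that a swap at level $\ell$ cannot reintroduce a coincidence at level $>\ell$ (it cannot, since those edges are untouched) nor at level $<\ell$ among copies already corrected (this needs the swap to respect the colour classes, which it does by construction), so the only copies whose badness can increase are those still being processed at level $\ell$, exactly as in the contradiction set-up above. Finally, tracking constants: each level contributes a multiplicative factor depending on $m$ to the required minimum degree $d$, so one takes $d$ of the order of a fixed polynomial in $m$ with degree roughly the diameter of $T$, and then $k(T,d)=f(m,md)$ from Theorem~\ref{thm-T-equitable} (composed with the reductions of Theorem~\ref{bipartite} and Proposition~\ref{prop-degdivbym}) is the desired edge-connectivity bound.
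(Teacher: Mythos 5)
Your core switching machinery is the same as the paper's (a conflict-minimal decomposition, an auxiliary digraph $D(x)$ on the copies, outdegree-versus-indegree counting along a reachable set), and for $d=4$ your argument essentially coincides with the proof in Section 6; the reduction via Theorem~\ref{bipartite}, Proposition~\ref{prop-degdivbym} and Theorem~\ref{thm-T-equitable}, and the observation that swaps of leaf edges stay inside the class of colourings you minimise over, are all fine. The genuine gap is in how you treat general $d$, which is what the statement requires. The entire second half of your proposal --- the level-by-level peeling, the worry that ``swapping an edge can propagate a defect further into the tree'', the per-level compounding of constants --- rests on the unproven premise that a homomorphic copy can have coincidences involving non-leaf vertices of $T$. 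Under the hypotheses this cannot happen, and proving it is precisely the missing key step: if two vertices of $T$ at distance $\ell$ get the same image, the copy contains a cycle of length at most $\ell\le d$, so girth at least $d$ forces $\ell=d$; hence the identified vertices are the two ends of a diametral path, so both are leaves, and for $d$ even they lie in the same class $T_B$ and are mapped to a common vertex of $B$. Consequently the only possible defects are pairs of deepest-level leaf edges of one copy meeting at a vertex of $B$; your ``level one'' round of leaf-edge switching already removes all of them, and your rounds $\ell\ge 2$ are vacuous. But the claims you invoke to set those rounds up (``after the first round the remaining copies agree with $T$ on everything except possibly the pendant leaves'', swaps cannot disturb already-corrected levels) are never established, and justifying them would require exactly the structural fact above --- so as written the proof of the theorem for $d\ge 5$ is not actually carried through.

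Two smaller points. For odd $d\ge 5$ you miss the short route the paper uses: after the bipartite reduction the girth is even and at least $d$, hence greater than the diameter of $T$, so Theorem~\ref{thm-treedecomp} gives a $T$-decomposition outright, no switching needed. And for the switching round itself you need a quantitative lower bound on the number of available leaf edges at each vertex of $A$; the paper gets this by noting that the blue edges form the $\lambda$-edge-connected graph $G_{b+1}$ of Theorem~\ref{thm-technical} with $\lambda\ge 2m$, while your appeal to the minimum colour degree from Theorem~\ref{thm-T-equitable} serves the same purpose --- a constant of order $m$ suffices, not a polynomial in $m$ of degree comparable to the diameter as you suggest.
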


\begin{proof}
We may assume as usual that $G$ is bipartite on vertex classes $A$ and $B$ and that all vertices in $A$ have degree divisible by $m$. We are going to show that it suffices for $G$ to be $f(m,2m)$-edge-connected, where $f$ is the function defined by Theorem~\ref{thm-technical}.

If $d$ is odd, then $G$ has girth at least $d+1$, so the conclusion follows from Theorem~\ref{thm-treedecomp}. Thus, we may assume that $d$ is even. Let $T_A$ and $T_B$ be the two vertex classes defined by a proper 2-colouring of $T$. We may assume that $T_B$ contains the ends of every longest path in $T$, since $d$ is even. We colour the edges of $T$ that are incident with leaves in $T_B$ blue, and the remaining edges red.

Let $\lambda$ be a natural number with $\lambda \geq 2m$, and assume $G$ is $f(m,\lambda)$-edge-connected. We can use Theorem~\ref{thm-technical} and Lemma~\ref{equivtreedecomp} to get a $T^*$-decomposition, where all vertices in $T_A$ correspond to vertices in $A$ in the homomorphic copies. We colour the edges of $G$ red and blue according to the colour of the edge they correspond to in $T$. Notice that by the proof of Theorem~\ref{thm-treedecomp}, the subgraph $G_{b+1}$ in Theorem~\ref{thm-technical} corresponds precisely to the edges coloured blue in $G$, so every vertex in $A$ is incident to at least $\lambda$ blue edges.

Since $G$ has girth $d$, the only way a homomorphic copy can fail to be an isomorphic copy of $T$ is if it contains a cycle of length $d$ or, equivalently, two blue edges intersecting at a vertex in $B$. As in the previous proof, we shall repair this by switching one of the blue edges with a blue edge from another homomorphic copy. We are not going to make any changes to the red edges, every red part of a homomorphic copy in the $T^*$-decomposition will be the red part of an isomorphic copy in the $T$-decomposition.

For $x\in B$, a \textit{conflict} at $x$ is a pair of blue edges contained in the same homomorphic copy of $T$ such that both of them are incident with $x$. Notice that one homomorphic copy may have several conflicts at $x$. Out of all $T^*$-decompositions we can get by switching blue edges between copies of our original $T^*$-decomposition, we choose one for which the number of conflicts is minimal.

Suppose there is a conflict at some vertex $x\in B$. Consider the directed graph $D(x)$ where the vertices are the homomorphic copies of $T$ in the $T^*$-decomposition. For two homomorphic copies $T_1$ and $T_2$, we add an edge oriented from $T_1$ to $T_2$ in $D(x)$ for every $a\in A$, $b\in B-V(T_1)$ such that $ax$ is a blue edge of $T_1$ and $ab$ is a blue edge of $T_2$. 
Again, the idea is to switch the blue edge $ax$ of $T_1$ with the blue edge $ab$ of $T_2$ to decrease the number of occurences of $x$ in $T_1$. Notice that such a switch might create a new conflict at $x$, but since $b$ is not contained in $T_1$ it will not create a conflict at any other vertex. Since less than $m$ of the blue edges at $a$ are incident with another vertex of $T_1$, there are at least $\lambda -m$ blue edges we can choose for the switch. In particular, every vertex of positive outdegree in $D(x)$ has outdegree at least $\lambda - m$.

Let $v$ be a vertex of $D(x)$ corresponding to a homomorphic copy containing a conflict at $x$, so $v$ has outdegree at least $2(\lambda -m)$. Let $X$ denote the set of vertices in $D(x)$ we can reach from $v$ via a directed path, including $v$. If every vertex in $X$ has positive outdegree, then the subgraph induced by $X$ has more than $(\lambda -m)|X|$ edges. However, every vertex of $D(x)$ has indegree at most $b$, where $b$ denotes the number of blue edges of $T$. Thus, the graph induced by $|X|$ has less than $b|X|$ edges, which is at most $(\lambda - m)|X|$ for $\lambda \geq 2m$. This shows that there must be a vertex $u$ of outdegree 0 in $X$. Now making the switches corresponding to the edges of the directed path from $v$ to $u$ results in a $T^*$-decomposition with fewer conflicts, contradicting our assumption.
\end{proof}

\begin{corollary}
For every tree $T$ with size $m$ and diameter at most 4, there exists a natural number $k(T)$ such that every $k(T)$-edge-connected graph with size divisible by $m$ has a $T$-decomposition.
\end{corollary}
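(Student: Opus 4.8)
The plan is to assemble the previous theorem with the bipartite reduction of Theorem~\ref{bipartite}; essentially nothing new needs to be proved. Fix a tree $T$ of size $m$ and diameter $d\le 4$, and let $k_0$ denote the constant $k(T,d)$ furnished by the previous theorem. I claim that $k(T)=4k_0+8m^{2m+3}$ works.

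Given a $k(T)$-edge-connected simple graph $G$ with $m\mid e(G)$, first I would apply Theorem~\ref{bipartite} (with $k=k_0$) to decompose $G$ into a $k_0$-edge-connected bipartite graph $G'$ and a graph $H$ that already admits a $T$-decomposition. Since $H$ has a $T$-decomposition, $m$ divides $e(H)$, and therefore $m$ divides $e(G')=e(G)-e(H)$ as well. Next I would observe that a bipartite graph contains no odd cycle, so the girth of $G'$ is at least $4\ge d$. Hence the previous theorem applies to $G'$ and produces a $T$-decomposition of $G'$; combining it with the $T$-decomposition of $H$ yields a $T$-decomposition of $G$, which is what we want.

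The only thing that requires attention is the interplay between girth and diameter: the reduction to bipartite graphs is exactly what guarantees girth at least $4$, and it is precisely the hypothesis $d\le 4$ that makes this enough to invoke the previous theorem, whose switching argument carries all of the real work. (For $d\le 3$ one may alternatively quote Theorem~\ref{thm-treedecomp} directly, since then any graph of girth at least $4>d$ turns a $T^*$-decomposition into a $T$-decomposition; the genuinely new content is the case $d=4$.) I do not expect any obstacle here — the corollary is a direct consequence of results already established above.
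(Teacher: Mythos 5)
Your proposal is correct and is exactly the argument the paper intends: it combines the preceding theorem (existence of $k(T,d)$) with the bipartite reduction of Theorem~\ref{bipartite}, which the paper itself points out makes the existence of $k(T,4)$ equivalent to that of $k(T)$. The bookkeeping (simplicity and girth at least $4$ of $G'$, divisibility of $e(G')$ by $m$) is handled correctly, so nothing is missing.
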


\section{Extension to infinite graphs}

It is natural to ask whether Conjecture~\ref{conj-treedecomp} could hold for infinite graphs. We conjecture the following canonical extension.

\begin{conjecture}\label{conj-infinite}
For every tree $T$, there exists a natural number $k_{\infty} (T)$ such that every $k_{\infty} (T)$-edge-connected infinite graph $G$ has a $T$-decomposition.
\end{conjecture}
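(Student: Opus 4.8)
The plan is to take the $T^*$-decomposition produced by Theorem~\ref{thm-treedecomp} and, in the only case where it can fail to be a genuine $T$-decomposition, to repair the offending homomorphic copies by exchanging leaf edges between copies, exploiting the large minimum degree in each colour class that Theorem~\ref{thm-technical} provides. First I would run the usual reductions: by Theorem~\ref{bipartite} together with Proposition~\ref{prop-degdivbym} it suffices to treat a bipartite graph $G$ on classes $A$ and $B$, of girth at least $d$ and size divisible by $m$, in which every vertex of $A$ has degree divisible by $m$, and where $G$ may be assumed $f(m,\lambda)$-edge-connected for any desired $\lambda$ (passing to a bipartite subgraph does not decrease the girth, and the discarded part is $T$-decomposable). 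I would set $k(T,d)=f(m,2m)$.

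If $d$ is odd, then $G$, being bipartite of girth at least $d$, has girth at least $d+1$; if $d\le 2$, then $G$, being simple, has girth at least $3>d$. In both cases $G$ has no cycle of length at most $d$, so --- since a homomorphic copy of $T$ that is not isomorphic to $T$ must contain a cycle of length at most $d$ --- the $T^*$-decomposition of Theorem~\ref{thm-treedecomp} is already a $T$-decomposition. So assume from now on that $d$ is even and $d\ge 4$. Fix the proper $2$-colouring $V(T)=T_A\cup T_B$; since $d$ is even, both endpoints of every longest path of $T$ lie in a common class, and I label things so that this class is $T_B$ (its relevant members being leaves of $T$). Colour an edge of $T$ \emph{blue} if its $T_B$-endpoint is a leaf of $T$ and \emph{red} otherwise, and use the proof of Theorem~\ref{thm-treedecomp} (that is, Theorem~\ref{thm-technical} and Lemma~\ref{equivtreedecomp}) to obtain a $T^*$-decomposition of $G$ in which every copy sends $T_A$ into $A$ and whose blue subgraph is spanning and $\lambda$-edge-connected; then colour each edge of $G$ according to the edge of $T$ it represents. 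The key structural point is that, because $G$ has girth at least $d$, a copy fails to be isomorphic to $T$ precisely when it contains a cycle of length $d$, and such a cycle can only arise by identifying the two (leaf) endpoints of a diametral path of $T$; it therefore consists of two blue edges incident with a common vertex $x\in B$, a configuration I call a \emph{conflict at $x$}. It thus suffices to transform the decomposition, by moving blue edges between copies, into one with no conflict.

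For this I would use an extremal exchange argument. Among all $T^*$-decompositions reachable from the initial one by repeatedly replacing a blue edge $ax$ of one copy by a blue edge $ab$ (with $a\in A$) of another, fix one minimising the total number of conflicts. If a conflict at some $x\in B$ persists, form the auxiliary digraph $D(x)$ whose vertices are the copies, with an arc $T_1\to T_2$ for each pair $a\in A$, $b\in B\setminus V(T_1)$ such that $ax$ is a blue edge of $T_1$ and $ab$ is a blue edge of $T_2$; such an arc encodes the legal switch of $ax$ for $ab$, and because $b\notin V(T_1)$ this switch can create a new conflict only at $x$. Two degree estimates drive the argument: every copy with a conflict at $x$ has out-degree at least $2(\lambda-m)$ in $D(x)$, since each of its (at least two) blue edges at $x$ sits at a vertex $a$ having at least $\lambda-m$ blue edges running to vertices outside $V(T_1)$; and every copy has in-degree at most $\beta$, the number of blue edges of $T$ (so $\beta\le m$), because distinct arcs into $T_2$ use distinct blue edges of $T_2$, the edge $ax$ determining its copy uniquely as the decomposition partitions $E(G)$. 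Letting $X$ be the set of copies reachable from a conflicted copy $v$ and supposing every vertex of $X$ had positive out-degree, the number of arcs inside $X$ would exceed $(\lambda-m)|X|$ yet be at most $\beta|X|$, which is impossible for $\lambda=2m\ge m+\beta$; hence $X$ contains a copy of out-degree $0$, and making the switches along a directed path from $v$ to it, in order, lowers the total number of conflicts, contradicting minimality. Therefore the minimiser has no conflict, every copy is isomorphic to $T$, and reinstating the $T$-decomposable part set aside at the start yields a $T$-decomposition of the original graph.

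I expect the main obstacle to be the combinatorial bookkeeping in the last paragraph: verifying rigorously that a switch along an arc never introduces a conflict at a vertex other than $x$, and --- more subtly --- that executing the switches along the path to an out-degree-$0$ copy produces a net decrease of one in the conflict count, rather than merely shifting conflicts around. This is precisely where the large minimum blue-degree guaranteed by Theorem~\ref{thm-technical} is essential, and where the constant $\lambda=2m$ must be balanced against the number of blue edges of $T$.
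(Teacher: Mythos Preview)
You have proved the wrong statement. The item in question is Conjecture~\ref{conj-infinite}, which concerns \emph{infinite} graphs and is stated in the paper as an open conjecture; the paper does not prove it (it only proves the weaker $T^*$-analogue, Theorem~\ref{thm-inf}). Your proposal, by contrast, is a proof of Theorem~6.2: the statement that for a tree $T$ of diameter $d$, every sufficiently edge-connected \emph{finite} simple graph of girth at least $d$ and size divisible by $m$ has a $T$-decomposition. Indeed, your argument is essentially identical to the paper's own proof of that theorem --- the reduction to bipartite $G$ with degrees on $A$ divisible by $m$, the blue/red colouring of $T$, the conflict digraph $D(x)$, and the out-degree/in-degree count forcing a sink --- but it has nothing to do with Conjecture~\ref{conj-infinite}.

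Concretely, your hypotheses ``girth at least $d$'' and ``size divisible by $m$'' are not present in Conjecture~\ref{conj-infinite} (the latter is meaningless for infinite graphs), and nothing in your write-up addresses the genuine difficulties of the infinite case: passing from the finite result to locally finite graphs via compactness, reducing to countable graphs, and then splitting vertices of infinite degree so as to recover locally finite pieces while preserving edge-connectivity. Even the paper's Theorem~\ref{thm-inf}, which handles only $T^*$-decompositions, requires all of this machinery; for genuine $T$-decompositions the problem remains open. So while your argument is correct as a proof of Theorem~6.2, it does not engage with the conjecture you were asked about.
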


Apart from the locally finite case, it is not known whether Conjecture~\ref{conj-treedecomp} implies Conjecture~\ref{conj-infinite}. However, the situation changes if we consider $T^*$-decompositions instead.  

\begin{theorem}\emph{\cite{thesis}}\label{thm-inf}
Let $T$ be a tree of size $m$. Every $(k_h(T)+m^2-m)$-edge-connected infinite graph has a $T^*$-decomposition.
\end{theorem}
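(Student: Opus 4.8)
The plan is to reduce the infinite case to the finite Theorem~\ref{thm-treedecomp} by a compactness-type argument, exhausting the graph by finite subgraphs that are individually decomposable. First I would fix a $(k_h(T)+m^2-m)$-edge-connected infinite graph $G$ and try to split off, from each finite piece, a bounded-size ``remainder'' of edges that does not yet belong to a homomorphic copy. The extra $m^2-m$ in the edge-connectivity is there precisely to absorb such a remainder: one wants to run the finite argument on a finite subgraph $G_n$ that is itself $k_h(T)$-edge-connected, after subtracting finitely many edges whose count is controlled modulo $m$.

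Concretely, I would write $G$ as an increasing union $G_1\subseteq G_2\subseteq\cdots$ of finite subgraphs with $\bigcup_n E(G_n)=E(G)$, arranged so that each $G_n$ is $(k_h(T)+m^2-m)$-edge-connected (possible because a $c$-edge-connected graph is the nested union of finite $c$-edge-connected subgraphs — take, for each pair of vertices, $c$ edge-disjoint paths and close off under finitely many such choices). The number of edges of $G_n$ need not be divisible by $m$, so I would fix a leaf edge $e_0$ of $T$ incident to a non-leaf $t_0$, and greedily peel off at most $m-1$ homomorphic (indeed isomorphic) copies of $T$ to correct the residue $e(G_n)\bmod m$; for this peeling one only needs a vertex of degree at least $m$, which high edge-connectivity supplies in abundance. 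What remains after discarding these at most $(m-1)m$ edges is a finite graph $G_n'$ with $e(G_n')\equiv 0\pmod m$ and edge-connectivity still at least $k_h(T)$ (here I invoke that removing $(m-1)m\le m^2-m$ edges drops edge-connectivity by at most that much). By Theorem~\ref{thm-treedecomp}, $G_n'$ has a $T^*$-decomposition; re-attaching the peeled-off copies gives a $T^*$-decomposition $\mathcal{D}_n$ of $G_n$.

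The final step is a König's-lemma / diagonal argument to stitch the $\mathcal{D}_n$ into a decomposition of $G$. The obstacle is that the $\mathcal{D}_n$ need not be coherent: a copy of $T$ using an edge deep inside $G_1$ may look completely different in $\mathcal{D}_1$ and in $\mathcal{D}_{100}$. The standard fix is a compactness argument: enumerate $E(G)=\{f_1,f_2,\dots\}$, and for each $n$ record the finite pattern ``which homomorphic copy of $T$ contains each of $f_1,\dots,f_n$'' induced by $\mathcal{D}_N$ for all large $N$. Since there are only finitely many such patterns on a fixed finite edge set (each copy touches at most $m$ of the $f_i$), by repeatedly passing to subsequences (equivalently, by König's Lemma on the tree of consistent finite patterns) one extracts a single limit decomposition: an infinite family of homomorphic copies of $T$ whose edge sets partition $E(G)$. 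Checking that this limit object is genuinely a partition — every edge covered exactly once, every part a full homomorphic copy of $T$ — is the place where one must be careful that a copy is not ``split across infinity'', but this is guaranteed because each copy has only $m$ edges and is therefore pinned down by the finite pattern once all its edges have appeared among $f_1,\dots,f_n$. I expect this coherence/limit step to be the main technical point; the edge-connectivity bookkeeping and the residue-correction are routine once the $m^2-m$ slack is in place.
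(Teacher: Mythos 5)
Your first step already fails: an infinite $c$-edge-connected graph need not contain \emph{any} finite $c$-edge-connected subgraph, let alone an exhausting nested sequence of them. Take any $c$-regular $c$-edge-connected infinite graph (for $c=3$, the infinite ladder): a finite subgraph with minimum degree $c$ would have to contain all $c$ edges at each of its vertices, hence be a union of components of the host graph, which is impossible since that graph is connected and infinite. Your device of ``closing off under $c$ edge-disjoint paths for each pair of vertices'' does not repair this, because the interior vertices of the added paths have small degree in the finite piece, so the pieces you build are not $c$-edge-connected. A second, independent problem is the compactness step: for K\"onig's Lemma you need the tree of patterns to be finitely branching, which requires each fixed edge to lie in only finitely many candidate copies; that holds in locally finite graphs but not in general, and the theorem is about arbitrary infinite graphs. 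Indeed the real difficulty (see the subdivided infinite star example right after the theorem) sits at vertices of infinite degree, and that is where the $m^2-m$ slack is actually needed --- not for residue corrections inside finite pieces. There is also a coherence gap in your limit: if the recorded pattern only says which of $f_1,\ldots,f_n$ lie in a common copy, the unrecorded edges of that copy can keep drifting as $N$ grows, and the limit part may end up with fewer than $m$ edges.

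The paper's argument is structurally different and its ingredients are what you are missing. It first reduces to countable graphs via Laviolette's theorem that every $k$-edge-connected infinite graph decomposes into $k$-edge-connected countable graphs. It then uses the one feature that distinguishes $T^*$- from $T$-decompositions: a $T^*$-decomposition of a graph obtained by vertex-splitting projects, under the identifications, to a $T^*$-decomposition of the original graph. A vertex-splitting theorem yields a split graph that is still $(k_h(T)+m^2-m)$-edge-connected with all blocks locally finite; after further splitting the cutvertices of infinite degree, each component is either locally finite --- where the standard K\"onig's-Lemma argument does apply, with only $m-1$ extra connectivity --- or has a single cutvertex of infinite degree and finite blocks, in which case one deletes homomorphic copies of $T$ costing at most $m^2-m$ edges per block to fix divisibility and invokes the finite Theorem~\ref{thm-treedecomp} blockwise. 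Some such splitting-plus-projection idea, rather than an exhaustion by finite highly edge-connected subgraphs, is indispensable here.
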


Notice that $k_h(T)$-edge-connectivity is not sufficient: Every connected graph of even size decomposes into paths of length 2, but the infinite star where all edges apart from one are subdivided has no such decomposition.

We shall just sketch the proof of Theorem~\ref{thm-inf} here, the details can be found in~\cite{thesis}.

\begin{proof}[Sketch of Proof]
By a standard argument using K\"onig's Infinity Lemma, every $(k_h(T)+m-1)$-edge-connected locally finite graph has a $T^*$-decomposition. By a result in~\cite{laviolette}, every infinite $k$-edge-connected graph has a decomposition into $k$-edge-connected countable graphs. Thus, it suffices to consider countable graphs.

The essential difference between $T$-decompositions and $T^*$-decompositions is that if $G'$ has a $T^*$-decomposition, and if $G$ can be obtained from $G'$ by vertex-identifications, then also $G$ has a $T^*$-decomposition. Since vertex-splitting is the reverse operation of vertex-identification, we would like to split the vertices in $G$ to get a locally finite graph of the same edge-connectivity. By Theorem 9 in~\cite{infinitesplitting}, there exists a splitting of the vertices of $G$ such that the resulting graph $G'$ is still $(k_h(T)+m^2-m)$-edge-connected and every block of $G'$ is locally finite. There might still be vertices of infinite degree in $G'$, but those must be cutvertices. It is not difficult to show that we can further split these cutvertices so that every connected component is $(k_h(T)+m^2-m)$-edge-connected and either locally finite, or has precisely one cutvertex of infinite degree and all its blocks are finite. In the second case, it is possible to delete some homomorphic copies of $T$ such that every block loses at most $m^2-m$ edges and has size divisible by $m$ afterwards. Now the resulting graph has a $T^*$-decomposition by the definition of~$k_h(T)$.
\end{proof}

\section*{Acknowledgement}

The author thanks Carsten Thomassen for his advice and helpful discussions, as well as Thomas Perrett for a careful reading of the manuscript.
This research was supported by ERC Advanced Grant GRACOL, project number 320812.

\bibliographystyle{plain}
\bibliography{literature}

\end{document}